
\documentclass[a4paper,hidelinks,oneside,onefignum,onetabnum]{article}
\usepackage{amsmath,amssymb,amsthm,bm,bbm,mathrsfs,mathtools,graphicx}
\usepackage{algorithm,algorithmic,enumerate}
\usepackage[a4paper,paperheight=297mm,paperwidth=210mm,margin=1.25in]{geometry} 
\usepackage{authblk}
\usepackage[font=small,labelfont=md,textfont=it]{caption}
\usepackage[titletoc, title]{appendix}
\usepackage{enumitem}
\usepackage[colorlinks,linkcolor=black,citecolor=black]{hyperref}
\usepackage{etoolbox}
\usepackage{longtable}
\usepackage{diagbox}
\usepackage{booktabs,makecell,multirow}
\usepackage[capitalize,nameinlink]{cleveref}[0.19]
\usepackage{cases,color}
\crefname{lemma}{Lemma}{Lemmas}
\crefname{theorem}{Theorem}{Theorems}
\crefname{discr}{Discretization}{Discretizations}

\crefformat{equation}{\textup{#2(#1)#3}}
\crefrangeformat{equation}{\textup{#3(#1)#4--#5(#2)#6}}
\crefmultiformat{equation}{\textup{#2(#1)#3}}{ and \textup{#2(#1)#3}}
{, \textup{#2(#1)#3}}{, and \textup{#2(#1)#3}}
\crefrangemultiformat{equation}{\textup{#3(#1)#4--#5(#2)#6}}%
{ and \textup{#3(#1)#4--#5(#2)#6}}{, \textup{#3(#1)#4--#5(#2)#6}}{, and \textup{#3(#1)#4--#5(#2)#6}}

\Crefformat{equation}{#2Equation~\textup{(#1)}#3}
\Crefrangeformat{equation}{Equations~\textup{#3(#1)#4--#5(#2)#6}}
\Crefmultiformat{equation}{Equations~\textup{#2(#1)#3}}{ and \textup{#2(#1)#3}}
{, \textup{#2(#1)#3}}{, and \textup{#2(#1)#3}}
\Crefrangemultiformat{equation}{Equations~\textup{#3(#1)#4--#5(#2)#6}}%
{ and \textup{#3(#1)#4--#5(#2)#6}}{, \textup{#3(#1)#4--#5(#2)#6}}{, and \textup{#3(#1)#4--#5(#2)#6}}

\crefdefaultlabelformat{#2\textup{#1}#3}

\apptocmd{\sloppy}{\hbadness 10000\relax}{}{}
\newcommand{\dual}[1]{\langle {#1} \rangle}
\newcommand{\dualb}[1]{\big\langle {#1} \big\rangle}
\newcommand{\dualB}[1]{\Big\langle {#1} \Big\rangle}

\newcommand{\norm}[1]{\lVert {#1} \rVert}
\newcommand{\normb}[1]{\big\lVert {#1} \big\rVert}
\newcommand{\normB}[1]{\Big\lVert {#1} \Big\rVert}

\newcommand{\snm}[1]{\lvert {#1} \rvert}

\newcommand{\snmB}[1]{\Big\lvert {#1} \Big\rvert}

\newcommand{\ssnm}[1]
{
	\left\vert\kern-0.25ex
	\left\vert\kern-0.25ex
	\left\vert
	{#1}
	\right\vert\kern-0.25ex
	\right\vert\kern-0.25ex
	\right\vert
}

\makeatletter
\def\spher@harm#1{%
	\vbox{\hbox{%
			\offinterlineskip
			\valign{&\hb@xt@2\p@{\hss$##$\hss}\vskip.2ex\cr#1\crcr}%
		}\vskip-.36ex}%
}
\def\gshone{\spher@harm{.}}
\def\gshtwo{\spher@harm{.&.}}
\def\gshthree{\spher@harm{.&.&.}}
\let\gsh\spher@harm
\makeatother

\newtheorem{proposition}{Proposition}[section]

\newtheorem{lemma}{Lemma}[section]
\newtheorem{remark}{Remark}[section]
\newtheorem{theorem}{Theorem}[section]

\numberwithin{equation}{section}

\makeatletter\def\@captype{table}\makeatother

\begin{document}

\title{
  \Large\bf Pathwise uniform convergence of a full discretization for a
  three-dimensional stochastic Allen-Cahn equation with
  multiplicative noise
}


\author[1]{Binjie Li\thanks{libinjie@scu.edu.cn}}
\author[2]{Qin Zhou\thanks{Corresponding author: zqmath@cwnu.edu.cn}}
\affil[1]{School of Mathematics, Sichuan University, Chengdu 610064, China}
\affil[2]{School of Mathematics, China West Normal University, Nanchong 637002, China}


\date{}
\maketitle


\begin{abstract}
This paper analyzes a full discretization of a three-dimensional stochastic Allen-Cahn equation
with multiplicative noise. The discretization combines the Euler scheme for temporal approximation
and the finite element method for spatial approximation.
A pathwise uniform convergence rate is derived, encompassing general spatial \( L^q \)-norms,
by using discrete versions of deterministic and stochastic maximal \( L^p \)-regularity estimates.
Additionally, the theoretical convergence rate is validated through numerical experiments.
The primary contribution of this work is the introduction of a technique to establish the
pathwise uniform convergence of finite element-based full discretizations for nonlinear
stochastic parabolic equations within the framework of general spatial \( L^q \)-norms.
\end{abstract}

\medskip\noindent{\bf Keywords:}
stochastic Allen-Cahn equation, Euler scheme, finite element method,
discrete stochastic maximal $ L^p $-regularity, pathwise uniform convergence

\section{Introduction}
Let \( T \in (0, \infty) \) denote the terminal time, and let \( \mathcal{O} \) be a bounded, convex domain in \( \mathbb{R}^3 \) with a smooth boundary \( \partial\mathcal{O} \). Let \( \Delta \) represent the Laplace operator subject to homogeneous Dirichlet boundary conditions. This paper studies the following stochastic Allen-Cahn equation:
\begin{equation}
\label{eq:model}
\begin{cases}
  \mathrm{d}y(t) = \left( \Delta y + y - y^3 \right)(t) \, \mathrm{d}t + F(y(t)) \, \mathrm{d}W_H(t), & t \in [0, T], \\
  y(0) = v,
\end{cases}
\end{equation}
where \( v \) denotes the prescribed initial condition,a
and \( W_H \) represents an \( H \)-cylindrical Brownian motion.
The diffusion coefficient \( F \) will be precisely defined later.

The Allen-Cahn equation is an important mathematical model in
materials science~\cite{Allen-Cahn1979} and it is also of fundamental
importance for the geometric moving interface problem~\cite{Evans1992}.
By incorporating thermal fluctuations, impurities, and atomistic processes
that describe surface motion, the stochastic Allen-Cahn equation
naturally arises~\cite{Funaki1995,Kohn2007,Roger2013}.

Recently, there has been a growing interest in the numerical analysis
of the stochastic Allen-Cahn equation. A brief summary of some recent
works is given below. For the stochastic Allen-Cahn equation
with additive noise, Becker and Jentzen~\cite{Becker2019} obtained
strong convergence rates for a nonlinearity-truncated Euler-type
approximation. Becker et al.~\cite{Becker2023} established sharp
strong convergence rates for an explicit space-time discrete
numerical approximation. Br\'ehier et al.~\cite{Brehier2019IMA} analyzed an
explicit temporal splitting numerical scheme. Qi and Wang~\cite{QiWang2019} derived
strong error estimates for a full discretization using the finite element
method for spatial discretization and the Euler scheme for temporal discretization.
In addition, Wang~\cite{Wangxiaojie2020} analyzed an efficient explicit
full-discrete scheme for a one-dimensional stochastic Allen–Cahn equation.
The numerical analysis of the stochastic Allen-Cahn equation with multiplicative
noise generally presents greater challenges. In this regard,
Feng et al.~\cite{Feng2017SIAM} derived the strong convergence of finite element
approximations of a stochastic Allen-Cahn equation with finite-dimensional
gradient-type multiplicative noise, Majee and Prohl~\cite{Prohl2018} analyzed
a space-time discretization, and Liu and Qiao~\cite{LiuQiao2021} investigated the strong convergence of Galerkin-based Euler approximations.
We also refer the reader to~\cite{Gyongy2016,Hutzenthaler2018,Jentzen2009,
Jentzen2018,Jentzen2020IMA,Kovacs2018,Kovacs2018-2,kovacs2011,QiZhangXu2023,Sauer2015} and
the references therein for more related works.

So far, the numerical analysis of the stochastic Allen-Cahn equation in the literature has generally considered only the $ L^p(\Omega;L^2) $-norm error estimates for fully discrete approximations. There is a lack of pathwise uniform convergence estimates and convergence estimates with general spatial $ L^q $-norms. Notably, Bréhier et al. \cite{Brehier2019IMA} derived some pathwise uniform convergence estimates for an explicit temporal splitting numerical scheme; however, their analysis might not be directly applicable to general domains (see \cite[pp.~2116]{Brehier2019IMA}). Extending this analysis to the full discretization of the three-dimensional stochastic Allen-Cahn equation with multiplicative noise is nontrivial.


The main contributions of this paper are summarized as follows:
\begin{itemize}
  \item We establish the stability estimate
    \[
      \mathbb{E}\left[\max_{1 \leqslant j \leqslant J}
      \left\| \sum_{k=0}^{j-1} \int_{t_k}^{t_{k+1}} (I - \tau \Delta_h)^{k-j} g_h(t) \, \mathrm{d}W_H(t) \right\|_{L^q}^p \right] \leqslant C \left\| g_h \right\|_{L^p(\Omega \times (0,T); \gamma(H, L^q))}^p,
    \]
    for all \( g_h \in L_\mathbb{F}^p(\Omega \times (0,T); \gamma(H, X_h)) \),
    where \( p \in (2, \infty) \) and \( q \in [2, \infty) \).
    Here, \( X_h \) denotes the finite element space and $ L^q $ denotes $ L^q(\mathcal O) $.
    This estimate is useful for demonstrating pathwise uniform convergence of
    fully discrete approximations based on finite elements for nonlinear stochastic parabolic equations,
    within the framework of general spatial \( L^q \)-norms.
    While van Neerven and Veraar \cite{Neerven2022} have developed analogous estimates for
    the Euler scheme, their methodology does not directly apply to deriving the aforementioned
    stability estimate, as detailed in Subsection \ref{subsec:stability}.
  \item For a fully discrete approximation of \cref{eq:model},
    using the Euler scheme for temporal discretization and the finite
    element method for spatial discretization, we establish the following
    pathwise uniform convergence estimate:
    \[
      \normB{
        \max_{1 \leqslant j \leqslant J}
        \norm{y(t_j) - Y_j}_{L^q}
      }_{L^p(\Omega)} \leqslant
      c \left( h^{2-\varepsilon} + \tau^{1/2} \right)
    \]
    for all $ p\in(2,\infty) $, $ q \in [2,\infty) $ and $\varepsilon>0 $.
    Notably, even for the three-dimensional stochastic Allen-Cahn equation with additive noise,
    to the best of our knowledge, such error estimates for fully discrete approximations using
    the Euler scheme have not been reported previously. This finding fills an important gap in
    the existing literature.
\end{itemize}

Our analysis is based on discrete deterministic and stochastic maximal $ L^p $-regularity estimates. We impose relatively mild regularity conditions on the diffusion coefficient $ F $ in \cref{eq:model}. Although our analysis assumes that the initial value $ v $ is deterministic and belongs to $ W_0^{1,\infty}(\mathcal{O}) \cap W^{2,\infty}(\mathcal{O}) $, it can be extended to cases where the initial value $ v $ has lower regularity. 
In addition, the pathwise uniform convergence of numerical approximations for
SPDEs within the framework of Banach spaces has recently attracted considerable
attention; see, for example, \cite{Cox2010, Cox2013, Klioba2024, Neerven2022}.
However, existing research has primarily focused on temporal semidiscretizations. This work introduces a new approach for establishing pathwise uniform convergence for finite element-based full discretizations of nonlinear stochastic parabolic equations, within the general spatial $ L^q $-norms framework.

The remainder of this paper is organized as follows.
In Section \ref{sec:math}, we introduce the notational conventions used throughout this work
and present a regularity result for the model problem.
In Section \ref{sec:main}, we describe a full discretization and provide a pathwise uniform
convergence estimate.
In Section \ref{sec:proof}, we establish a stability estimate for a discrete stochastic
convolution and provide a proof for the previously mentioned convergence estimate.
Section \ref{sec:numer} presents numerical experiments to validate the theoretical result.
Finally, Section \ref{sec:conclusion} summarizes the findings and contributions of this study.




\section{Mathematical Setup}
\label{sec:math}
For any two Banach spaces \(E_1\) and \(E_2\), we denote by \(\mathcal{L}(E_1, E_2)\) the space of all bounded linear operators from \(E_1\) to \(E_2\), and abbreviate \(\mathcal{L}(E_1, E_1)\) to \(\mathcal{L}(E_1)\). The symbol \(I\) represents the identity map.
Let \(\mathcal{O}\) be a bounded, convex domain in \(\mathbb{R}^3\) of class \(\mathcal{C}^2\) (cf.~Section 11.1 in Chapter 1 of \cite{Yagi2010}). For any \(q \in [1, \infty]\), let \(W_0^{1,q}(\mathcal{O})\) and \(W^{2,q}(\mathcal{O})\) denote the usual Sobolev spaces (see, e.g., \cite{Tartar2007}), and we shall abbreviate \(L^q(\mathcal{O})\) to \(L^q\) for simplicity.
Let \(\Delta\) denote the Laplace operator subject to homogeneous Dirichlet boundary conditions. For \(\alpha \in [0, \infty)\) and \(q \in (1, \infty)\), let \(\dot{H}^{\alpha, q}\) denote the domain of \((- \Delta)^{\alpha/2}\) in \(L^q\), endowed with the norm
\[
\norm{u}_{\dot{H}^{\alpha, q}} := \norm{(-\Delta)^{\alpha/2} u}_{L^q}, \quad \forall u \in \dot{H}^{\alpha, q}.
\]
We use \(\dot{H}^{-\alpha, q'}\) to denote the dual space of \(\dot{H}^{\alpha, q}\), where \(q'\) is the conjugate exponent of \(q\), i.e., \(1/q + 1/q' = 1\).
For any \(q \in (1, \infty)\), it is well-known (see, e.g., Theorem 16.14 of \cite{Yagi2010}) that \(\dot{H}^{2, q}\) coincides with the intersection \(W_0^{1, q}(\mathcal{O}) \cap W^{2, q}(\mathcal{O})\), and is equipped with a norm equivalent to that of \(W^{2, q}(\mathcal{O})\). Moreover, \(\dot{H}^{1, q}\) is identical to \(W_0^{1, q}(\mathcal{O})\), with equivalent norms.

Assume that $ (\Omega, \mathcal F, \mathbb P) $ is a given
complete probability space with a right-continuous filtration $ \mathbb F :=
(\mathcal F_t)_{t \geqslant 0} $, on which a sequence of
independent $ \mathbb F $-adapted Brownian motions
$ (\beta_n)_{n \in \mathbb N} $ are defined. For any $ 0 \leqslant s < t < \infty $ and for any $ n \in \mathbb N $,
the increment $ \beta_n(t) - \beta_n(s) $ is independent of $ \mathcal F_s $.
Let $ H $ be a separable Hilbert
space with inner product $ (\cdot,\cdot)_H $ and an orthonormal basis $ (h_n)_{n \in \mathbb N} $.
Let the $ H $-cylindrical Brownian motion be defined such that for each $ t \in \mathbb R_{+} $,
 $ W_H(t) \in \mathcal L(H, L^2(\Omega)) $ is given by
\[ 
  W_H(t)h := \sum_{n \in \mathbb N} \beta_n(t) (h,h_n)_H,
  \quad \forall h \in H.
\]
We use the symbol \( \mathbb E \) to represent the expectation operator associated with the probability space \( (\Omega, \mathcal F, \mathbb P) \).
For any UMD Banach space \( E \), let \( \gamma(H, E) \) denote the space of all \( \gamma \)-radonifying operators
from \( H \) to \( E \). For any \( p \in [2, \infty) \),
we denote by \( L_\mathbb{F}^p(\Omega \times (0, T); \gamma(H, E)) \)
and $ L_\mathbb F^p(\Omega;L^2(0,T;\gamma(H,E))) $ the spaces
of all \( \mathbb{F} \)-adapted \( \gamma(H, E) \)-valued processes
in \( L^p(\Omega \times (0, T); \gamma(H, E)) \) and \( L^p(\Omega; L^2(0, T; \gamma(H, E))) \),
respectively.
When $ E $ is a Hilbert space, $ \gamma(H,E) $ coincides with the standard Hilbert-Schmidt space
from $ H $ to $E $, and we have the celebrated It\^o's isometry:
\[
  \mathbb E \left\lVert\int_0^T g(t) \, \mathrm{d}W_H(t)\right\rVert_{E}^2
  = \mathbb E\int_0^T \norm{g(t)}_{\gamma(H,E)}^2 \, \mathrm{d}t,
\]
for all $ g \in L_\mathbb F^2(\Omega\times(0,T);\gamma(H,E)) $.
For an in-depth discussion of \( \gamma \)-radonifying operators,
the reader is referred to Chapter 9 of \cite{HytonenWeis2017}.
An extensive treatment of stochastic integrals with respect to $W_H$ in UMD Banach spaces can
be found in \cite{Neerven2007}.

Next, we define the diffusion coefficient $ F $ in \cref{eq:model} as follows.
Let $ (f_n)_{n \in \mathbb N} $ denote a sequence of continuous functions
from $ \overline{\mathcal O} \times \mathbb R $ to $ \mathbb R $ such
that for each $ n \in \mathbb N $,
\begin{equation}
  \label{eq:fn-growth}
  \begin{cases}
  f_n(x,0) = 0,
    & \forall x \in \partial\mathcal O, \\
    \snm{f_n(x,y)} \leqslant C_F(1+\snm{y}),
    & \forall x \in \mathcal O, \, \forall y \in \mathbb R, \\
    \snm{f_n(x,y_1) - f_n(x,y_2)} \leqslant C_F \snm{y_1-y_2},
    & \forall x \in \mathcal O, \, \forall y_1,y_2 \in \mathbb R, \\
    \snm{f_n(x_1,y) - f_n(x_2,y)} \leqslant C_F(1 + \snm{y})\snm{x_1-x_2},
    & \forall x_1,x_2 \in \mathcal O, \, \forall y \in \mathbb R, 
  \end{cases}
\end{equation}
 where $ C_F $ is a positive constant independent of $ n $, $ x $, $ y $,
 $ x_1 $, $ x_2 $, $ y_1 $ and $ y_2 $.
 Let $ (\lambda_n)_{n \in \mathbb N} $ be a sequence of non-negative numbers
 satisfying the condition $ \sum_{n \in \mathbb N} \lambda_n < \infty $.
 Finally, for any $ u \in L^2 $ define 
 \[
   F(u) := \sum_{n=0}^\infty \sqrt\lambda_n h_n \otimes f_n(\cdot,u(\cdot)),
 \]
 where $ h_n \otimes f_n(\cdot,u(\cdot)) \in \gamma(H,L^2) $ is defined by 
 \[
   \big( h_n \otimes f_n(\cdot,u(\cdot)) \big)(w) := (w,h_n)_H f_n(\cdot,u(\cdot)), \quad \forall w \in H.
 \]
 By \cite[Proposition~9.1.9]{HytonenWeis2017} and \eqref{eq:fn-growth},
 a straightforward calculation gives the following growth property:
 \begin{align*}
   \norm{F(u)}_{\gamma(H,L^2)} \leqslant 
   c \big(1 + \norm{u}_{L^2}\big),
   \quad \forall u \in L^2,
 \end{align*}
 where $ c $ is a positive constant independent of $ u $.

 We call a process \( y \in L_\mathbb{F}^6(\Omega \times (0, T); L^6) \) a mild solution to the model problem \cref{eq:model} if it satisfies almost surely the following equality for all \( t \in [0, T] \):
 \[
   y(t) = e^{t\Delta} v + \int_0^t e^{(t-s)\Delta} \big[ y(s) - y^3(s) \big] \, \mathrm{d}s + \int_0^t e^{(t-s)\Delta} F(y(s)) \, \mathrm{d}W_H(s),
 \]
 where \( e^{t\Delta} \), \( t \in [0, T] \), denotes the analytic semigroup generated by the Laplace
 operator \(\Delta\) with homogeneous Dirichlet boundary conditions, and $ v $ is the given
 initial value. While the condition \( y \in L_\mathbb{F}^6(\Omega \times (0, T); L^6) \) can be relaxed 
 (see, e.g., Chapter 6 of \cite{Prato2014}), it suffices for our purposes.
 Following the proof of Theorem 3.1 in \cite{LiZhou2024A}, we establish the following regularity result.
 For related regularity results, we also refer the reader to \cite[Section 3]{LiuQiao2021} and \cite{LiuWei2013}.
 \begin{proposition}
   \label{prop:y-regu}
   Suppose the initial value \( v \) belongs to \( W_0^{1,\infty}(\mathcal{O}) \cap W^{2,\infty}(\mathcal{O}) \). Then, the model problem \eqref{eq:model} admits a unique mild solution. Moreover, for any \( p \in (2, \infty) \), \( q \in [2, \infty) \), and \( \epsilon > 0 \), the solution \( y \) satisfies
   \[
     y \in L_\mathbb{F}^p(\Omega \times (0, T); \dot{H}^{2, q}) \cap L^p(\Omega; C([0, T]; \dot{H}^{2-\epsilon, q})).
   \]
 \end{proposition}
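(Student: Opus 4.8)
The plan is to decompose the mild solution into its three constituent parts and to separate the questions of existence/uniqueness and of regularity: existence is obtained by a contraction argument in a low-regularity space in which the cubic term can be tamed, while the claimed membership in $L_\mathbb{F}^p(\Omega\times(0,T);\dot{H}^{2,q})\cap L^p(\Omega;C([0,T];\dot{H}^{2-\epsilon,q}))$ is then reached by a bootstrap driven by the deterministic and stochastic maximal $L^p$-regularity estimates. The two structural features I would exploit throughout are the one-sided Lipschitz (dissipative) character of the reaction term, namely $-(y_1^3-y_2^3)(y_1-y_2) = -(y_1-y_2)^2(y_1^2+y_1y_2+y_2^2)\le 0$, so that $y\mapsto y-y^3$ is one-sided Lipschitz with constant one, and the bounds \eqref{eq:fn-growth} on the $f_n$ together with the summability $\sum_n\lambda_n<\infty$, which control $F$ in the relevant $\gamma$-norms.

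For existence and uniqueness, I would first truncate the cubic term, replacing $y^3$ by $\theta_R(\|y\|)\,y^3$ for a smooth cutoff $\theta_R$, so that the resulting drift becomes globally Lipschitz on the working space; combined with the Lipschitz bound on $F$ furnished by \eqref{eq:fn-growth}, Banach's fixed point theorem applied to the mild-solution map in the $L^6$-framework in which the mild solution is defined yields a unique truncated solution. A priori moment estimates, obtained by testing against the dissipative structure and applying It\^o's formula together with the growth bound $\|F(u)\|_{\gamma(H,L^2)}\le c(1+\|u\|_{L^2})$, then show that the solution remains in the region where the cutoff is inactive, so the truncation may be removed and a unique global mild solution is obtained.

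The heart of the proof is the regularity bootstrap applied to $y(t)=e^{t\Delta}v+\int_0^t e^{(t-s)\Delta}(y-y^3)(s)\,\mathrm{d}s+\int_0^t e^{(t-s)\Delta}F(y(s))\,\mathrm{d}W_H(s)$. The linear term lies in $C([0,T];\dot{H}^{2,q})$ for every $q<\infty$, since $v\in W_0^{1,\infty}(\mathcal O)\cap W^{2,\infty}(\mathcal O)\hookrightarrow\dot{H}^{2,q}$ and $e^{t\Delta}$ is analytic. For the stochastic convolution, stochastic maximal $L^p$-regularity gives a gain of one spatial derivative: if $F(y)\in L_\mathbb{F}^p(\Omega\times(0,T);\gamma(H,\dot{H}^{1,q}))$ then the convolution lies in $L^p(\Omega\times(0,T);\dot{H}^{2,q})$, and the hypothesis on $F(y)$ is verified from \eqref{eq:fn-growth} through the chain rule $\nabla f_n(\cdot,u)=(\partial_x f_n)(\cdot,u)+(\partial_y f_n)(\cdot,u)\nabla u$, whose two pieces are bounded in $L^q$ by $c(1+\|u\|_{L^q})$ and $c\|u\|_{\dot{H}^{1,q}}$, while the boundary condition $f_n(x,0)=0$ on $\partial\mathcal O$ ensures $f_n(\cdot,u)\in W_0^{1,q}(\mathcal O)=\dot{H}^{1,q}$. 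For the deterministic convolution, deterministic maximal $L^p$-regularity gives a gain of two derivatives, so it suffices to bound $y-y^3$ in $L^p(\Omega\times(0,T);L^q)$, which reduces to $\|y^3\|_{L^q}\le\|y\|_{L^\infty}^2\|y\|_{L^q}$ and is available once $y\in C([0,T];\dot{H}^{\beta,q})$ with $\beta>3/q$, since then $\dot{H}^{\beta,q}\hookrightarrow L^\infty$ in three dimensions. Iterating these two gains, starting from the baseline integrability produced in the existence step and raising $\beta$ until both $\beta\ge1$ and $\beta>3/q$ hold, closes the argument; the pathwise-continuous statement follows from the time-trace embeddings associated with each maximal-regularity estimate.

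The main obstacle I anticipate is closing the bootstrap for the cubic term: because $y\mapsto y^3$ is only locally Lipschitz and super-linear, the $L^\infty$ (equivalently, high-integrability) control that renders $\|y^3\|_{L^q}$ finite must be secured \emph{before} the maximal-regularity gain is invoked, so the order of the iteration is essential and one must check at each stage that the embedding $\dot{H}^{\beta,q}\hookrightarrow L^\infty$ is genuinely available in $\mathbb R^3$ for the current $\beta$. The $\epsilon$-loss in the pathwise-continuous conclusion is not essential but is unavoidable with this method: it stems from the time-trace embedding used to pass from the $L^p(\Omega\times(0,T);\dot{H}^{2,q})$ bound, chiefly for the stochastic convolution, to a pathwise-continuous modification, costing an arbitrarily small amount of spatial smoothness that is absorbed by taking the integrability exponent $p$ in the maximal-regularity estimates large.
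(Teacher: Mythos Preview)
The paper does not actually prove this proposition: it simply states that the result follows ``the proof of Theorem~3.1 in \cite{LiZhou2024A}'' and refers the reader to \cite{LiuQiao2021,LiuWei2013} for related regularity statements. So there is no in-paper argument to compare against, only an external reference.

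Your plan is the standard one for this class of problems and is almost certainly what the referenced proof does: truncate the cubic nonlinearity to obtain a globally Lipschitz drift, use a fixed-point argument together with the Lipschitz bound on $F$ from \eqref{eq:fn-growth} to get a unique truncated solution, exploit the one-sided Lipschitz structure of $y-y^3$ via It\^o's formula to obtain $R$-independent moment bounds and thereby remove the truncation, and then bootstrap regularity using deterministic and stochastic maximal $L^p$-regularity applied to the three pieces of the mild formulation. The paper itself provides the $F$-bound you need for the stochastic-convolution step, namely $\norm{F(u)}_{\gamma(H,\dot H^{1,q})}\leqslant c(1+\norm{u}_{\dot H^{1,q}})$ in \cref{lem:F-interp}, so your chain-rule heuristic is on target; just be aware that \eqref{eq:fn-growth} gives only Lipschitz continuity in each variable, not $C^1$, so the differentiation should be phrased through difference quotients or Rademacher's theorem rather than a literal chain rule. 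With that caveat, your sketch is sound and matches what the paper is pointing to.
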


\section{Full Discretization}
\label{sec:main}
Let \(J\) be a positive integer, and set \(t_j := j \tau\) for each \(0 \leqslant j \leqslant J\),
where \(\tau := T/J\) is the time step. Let \(\mathcal{K}_h\) be a conforming and quasi-uniform
triangulation of \(\mathcal{O}\) consisting of three-dimensional simplexes,
and let the spatial mesh size \(h\) denote the maximum diameter of the simplexes in \(\mathcal{K}_h\).
Define
\[
  X_h := \Big\{u_h \in C(\overline{\mathcal{O}}) \,\big|\,
    u_h = 0 \text{ on } \partial\mathcal{O}_h \text{ and outside of } \mathcal{O}_h,
  \text{ and } u_h|_K \in P_r(K) \text{ for each } K \in \mathcal{K}_h \Big\},
\]
where \(C(\overline{\mathcal{O}})\) is the set of all continuous functions
on the closure of \(\mathcal{O}\), \(\mathcal{O}_h\) is the closure of
the union of the simplexes in \(\mathcal{K}_h\), \(r \geqslant 1\)
is an integer, and \(P_r(K)\) is the set of all polynomials on \(K\)
with degree not exceeding \(r\) for each \(K \in \mathcal{K}_h\).
Let \(P_h\) be the \(L^2\)-orthogonal projection operator onto \(X_h\),
and, for any \(u_h \in X_h\), define \(\Delta_hu_h \in X_h\) by
\[
  \int_{\mathcal{O}} (\Delta_hu_h) \cdot v_h \, \mathrm{d}\mu =
  -\int_{\mathcal{O}} \nabla u_h \cdot \nabla v_h
  \, \mathrm{d}\mu, \quad \forall v_h \in X_h,
\]
where \(\mu\) denotes the three-dimensional Lebesgue measure on \(\mathcal{O}\).

This study considers the following full discretization of the model problem \cref{eq:model}:
\begin{equation}
  \label{eq:Y}
  \begin{cases}
    Y_{j+1} - Y_j =
    \tau \left(\Delta_h Y_{j+1} + Y_{j} - P_h Y_{j+1}^3\right)
    + P_h \int_{t_j}^{t_{j+1}} F(Y_j) \, \mathrm{d}W_H(t), \quad 0 \leqslant j < J, \\
    Y_0 = P_h v.
  \end{cases}
\end{equation}
For this full discretization, we establish the following pathwise uniform convergence estimate; the proof is deferred to \cref{sec:proof}.

\begin{theorem}
  \label{thm:convLq}
  Assume that the initial value \( v \) belongs to \( W_0^{1,\infty}(\mathcal{O}) \cap W^{2,\infty}(\mathcal{O}) \),
  and \( \tau \leqslant h^2 \). Let \( y \) be the mild solution of the model problem \cref{eq:model},
  and let \( (Y_j)_{j=0}^J \) be the solution of the full discretization \cref{eq:Y}.
  Then for all \( p \in (2, \infty) \), \( q \in [2, \infty) \), and \( \varepsilon > 0 \),
  the following pathwise uniform convergence estimate holds:
  \begin{equation}
    \label{eq:convLpLq-full}
    \normB{
      \max_{1 \leqslant j \leqslant J}
      \norm{Y_j - y(t_j)}_{L^q}
    }_{L^p(\Omega)} \leqslant 
    c \big( h^{2-\varepsilon} + \tau^{1/2} \big),
  \end{equation}
  where \( c \) denotes a positive constant independent of the spatial mesh size \( h \)
  and the time step \( \tau \).
\end{theorem}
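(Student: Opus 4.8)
The plan is to represent the fully discrete solution through a discrete variation-of-constants formula and to compare it with the mild solution of \cref{eq:model} term by term, isolating a deterministic convolution and a stochastic convolution to which the discrete deterministic and stochastic maximal \( L^p \)-regularity estimates can be applied. Writing \( R_h := (I - \tau\Delta_h)^{-1} \) on \( X_h \) and unrolling the recursion \cref{eq:Y}, one obtains
\[
  Y_j = R_h^{j}P_h v + \tau\sum_{k=1}^{j} R_h^{\,j-k+1}\big(Y_{k-1} - P_h Y_k^3\big) + \sum_{k=0}^{j-1} R_h^{\,j-k}\,P_h\!\int_{t_k}^{t_{k+1}} F(Y_k)\,\mathrm{d}W_H(t),
\]
in which the explicit linear term \( \tau Y_{k-1} \) sits in the drift alongside \( -P_h Y_k^3 \), matching the reaction \( y - y^3 \) in the continuous mild form. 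The last sum is exactly the discrete stochastic convolution governed by the stability estimate announced in the introduction, so its pathwise uniform contribution is directly controllable in \( L^p(\Omega) \); the first two terms, compared against the corresponding parts of \( y(t_j) \), form the deterministic part of the error \( e_j := Y_j - y(t_j) \).

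For the stochastic part I would split
\[
  \sum_{k=0}^{j-1} R_h^{\,j-k}P_h\!\int_{t_k}^{t_{k+1}}\! F(Y_k)\,\mathrm{d}W_H - \int_0^{t_j}\! e^{(t_j-s)\Delta}F(y(s))\,\mathrm{d}W_H
\]
into a term driven by the increments \( F(Y_k) - F(y(s)) \) and a consistency term in which \( F(y(\cdot)) \) is frozen. The first is bounded, via the global Lipschitz property \eqref{eq:fn-growth} of \( F \) and the stability estimate, by a Gronwall-type feedback in \( \norm{e_k}_{L^q} \); the second is estimated by discrete stochastic maximal \( L^p \)-regularity together with smoothing and comparison bounds for \( R_h^{\,j-k}P_h - e^{(t_j-s)\Delta} \) (the hypothesis \( \tau \leqslant h^2 \) entering here, through inverse inequalities relating the discrete resolvent to the continuous semigroup), and it yields the rates \( \tau^{1/2} \) and \( h^{2-\varepsilon} \) by invoking the regularity \( y \in L^p(\Omega; C([0,T]; \dot H^{2-\varepsilon,q})) \cap L_\mathbb F^p(\Omega\times(0,T);\dot H^{2,q}) \) from \cref{prop:y-regu}. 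The deterministic convolution is treated analogously, with discrete deterministic maximal \( L^p \)-regularity in place of its stochastic counterpart, the temporal and spatial consistency errors again contributing \( \tau^{1/2} \) (limited by the roughness of the driving noise) and \( h^{2-\varepsilon} \).

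The cubic term is the crux. To control \( \norm{Y_k^3 - y(t_k)^3}_{L^q} \) I would use \( Y_k^3 - y(t_k)^3 = (Y_k - y(t_k))\big(Y_k^2 + Y_k\, y(t_k) + y(t_k)^2\big) \) together with H\"older's inequality, reducing it to \( \norm{e_k}_{L^q} \) multiplied by a quadratic factor. Bounding that factor in \( L^\infty \) requires not only the embedding \( \dot H^{2-\varepsilon,q}\hookrightarrow L^\infty \) for \( y \) (valid in three dimensions for \( q\geqslant 2 \) and \( \varepsilon \) small), but also a matching discrete a priori bound
\[
  \normB{\max_{0\leqslant j\leqslant J}\norm{Y_j}_{\dot H^{2-\varepsilon,q}}}_{L^p(\Omega)} \leqslant c,
\]
which I would establish beforehand by applying the same maximal \( L^p \)-regularity machinery directly to \cref{eq:Y} and exploiting the dissipativity of the implicitly treated cubic term. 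With both solutions controlled in \( L^\infty \) with arbitrarily high moments, a further H\"older splitting in \( \Omega \) decouples the random quadratic coefficient from the error. Collecting the deterministic and stochastic estimates then yields, for \( \phi_i := \normB{\max_{1\leqslant j\leqslant i}\norm{e_j}_{L^q}}_{L^p(\Omega)}^p \), an inequality of the form \( \phi_i \leqslant c\,(h^{2-\varepsilon} + \tau^{1/2})^p + c\sum_{k=1}^{i}\tau\,\phi_k \), whence the claimed estimate \cref{eq:convLpLq-full} follows by the discrete Gronwall inequality. The step I expect to be hardest is precisely this interplay between the cubic nonlinearity and the pathwise uniform norm: establishing the strong discrete a priori bound that tames the cubic term, and arranging the stochastic convolution error, through the new stability estimate, so that the maximum over \( j \) remains inside the \( L^p(\Omega) \)-norm while the Gronwall closure is carried out.
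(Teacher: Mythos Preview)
Your overall architecture---variation-of-constants, split into deterministic and stochastic convolutions, use the new stability estimate for the latter, Gronwall at the end---is sound in spirit, but the linchpin you identify, the a~priori bound
\[
  \normB{\max_{0\leqslant j\leqslant J}\norm{Y_j}_{\dot H^{2-\varepsilon,q}}}_{L^p(\Omega)} \leqslant c,
\]
is a genuine gap. The dissipativity of the implicit cubic term is clean only in \(L^2\): testing \(P_hY_{j+1}^3\) against \(Y_{j+1}\) gives \(\norm{Y_{j+1}}_{L^4}^4\), but testing against \(\snm{Y_{j+1}}^{q-2}Y_{j+1}\) for \(q>2\) is spoiled by the \(L^2\)-projection \(P_h\), and maximal \(L^p\)-regularity applied to the scheme as you suggest runs into the bootstrap \(\norm{Y_{j+1}^3}_{L^q}=\norm{Y_{j+1}}_{L^{3q}}^3\). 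The paper never proves your strong bound; it only obtains the much weaker time-averaged statement that \(\mathbb E\bigl[\tau\sum_j\norm{Y_j}_{L^q}^p\bigr]\) is uniformly bounded, and getting even that requires an indirect route.

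Concretely, the paper inserts the spatial semidiscretization \(y_h\) and quotes the bound \(\norm{y-y_h}_{L^p(\Omega;C([0,T];L^q))}\leqslant ch^{2-\varepsilon}\) from prior work, reducing everything to \(\norm{\max_j\norm{y_h(t_j)-Y_j}_{L^q}}_{L^p(\Omega)}\leqslant c\tau^{1/2}\). This is then proved by a bootstrap: first an \(L^2\) time-averaged error \(\mathbb E\bigl[\tau\sum_j\norm{y_h(t_j)-Y_j}_{L^2}^p\bigr]\leqslant c\tau^{p/2}\) via an energy argument exploiting the \(L^2\) dissipativity; then the condition \(\tau\leqslant h^2\) together with the inverse estimate \(\norm{u_h}_{L^6}\leqslant ch^{-1}\norm{u_h}_{L^2}\) converts this into the \(L^q\) stability of \((Y_j)\) (this is where \(\tau\leqslant h^2\) actually enters, not in comparing \(R_h\) with \(e^{t\Delta}\) as you suggest); then a further pass gives the time-averaged \(L^q\) error; and only at the very end is the pathwise maximum taken, with no Gronwall at that stage---the stochastic convolution is controlled by \cref{prop:DMINEQ} and the cubic term by the already-established time-averaged \(L^q\) error plus the weak stability. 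Your direct Gronwall on \(\phi_i\) would need precisely the strong pathwise \(L^\infty\) control of \(Y_j\) that is missing.
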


\begin{remark}
  For \cref{thm:convLq}, we have the following comments.
  \begin{enumerate}
    \item The condition \( \tau \leqslant h^2 \) can be relaxed by re-examining and refining the analysis in Subsection~\ref{sssec:love}.
    \item Assume that the sequence \( (f_n)_{n \in \mathbb{N}} \), in addition to satisfying \eqref{eq:fn-growth}, consists of twice continuously differentiable functions and satisfies the following growth condition for each \( n \in \mathbb{N} \):
      \begin{align*}
        \left| \nabla_x^2 f_n(x, y) \right|
        + \left| \nabla_x \nabla_y f_n(x, y) \right|
        + \left| \nabla_y^2 f_n(x, y) \right| \leqslant C_F (1 + |y|),
        \quad \forall x \in \mathcal{O}, \, \forall y \in \mathbb{R}^3,
      \end{align*}
      where \( C_F \) is a positive constant independent of \( n \). Under the condition that the initial value \( v \) belongs to \( W_0^{1,\infty}(\mathcal{O}) \cap W^{2,\infty}(\mathcal{O}) \), the spatial accuracy \( O(h^{2-\epsilon}) \) in \cref{eq:convLpLq-full} can be improved to \( O(h^2) \) for all \( p \in (2, \infty) \) and \( q \in [2, \infty) \).
    \item The authors in~\cite{LiZhou2024A} derived a strong convergence rate for a spatial semi-discretization of \cref{eq:model} with a rough initial value \( v \in L^\infty \). However, to the best of our knowledge, there is currently no strong convergence estimate for fully discrete approximations of the three-dimensional stochastic Allen-Cahn equation with a rough initial value. It would be of interest to establish similar error estimates as \cref{eq:convLpLq-full} for fully discrete approximations with rough initial values.
  \end{enumerate}
\end{remark}




 \section{Proofs}
 \label{sec:proof}
In this section, we adopt the following notational conventions:
\begin{itemize}
  \item The notation \( \langle \cdot, \cdot \rangle \) denotes the integral over the domain \( \mathcal{O} \).
  \item For \( \alpha \in \mathbb{R} \) and \( q \in (1, \infty) \),
    \( \dot{H}_h^{\alpha, q} \) represents the Banach space \( X_h \) equipped
    with the norm
    \[
      \norm{u_h}_{\dot{H}_h^{\alpha, q}} := \norm{(-\Delta_h)^{\alpha/2} u_h}_{L^q}, \quad \forall u_h \in X_h.
    \]
  \item The symbol \( c \) denotes a generic positive constant,
    which is independent of the spatial mesh size \( h \) and the time step \( \tau \).
    Its specific value may depend on the regularity parameters associated with
    the triangulation \( \mathcal{K}_h \),
    and may change from line to line.
\end{itemize}

The subsequent structure of this section is outlined as follows.
In Subsection \ref{ssec:pre}, we introduce several preliminary lemmas that
form the foundation for our subsequent analysis.
Subsequently, Subsection \ref{subsec:stability} is dedicated to establishing a
stability estimate for a discrete stochastic convolution,
which is of independent interest and also crucial for
the proof of \cref{thm:convLq}.
Lastly, Subsection \ref{subsec:convLq} presents a detailed proof of \cref{thm:convLq}.

\subsection{Preliminary lemmas}
\label{ssec:pre}

According to the theoretical results in \cite{Douglas1975},
the projection operator $ P_h $ has the following well-known stability property.
\begin{lemma} 
  \label{lem:Ph-stab}
  For any $ \alpha \in [0,2] $ and $q \in (1,\infty) $,
  $ \norm{P_h}_{\mathcal L(\dot H^{\alpha,q}, \, \dot H_h^{\alpha,q})} $
  is uniformly bounded with respect to the spatial mesh size $ h $.
\end{lemma}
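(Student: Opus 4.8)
The plan is to establish the bound at the two endpoints \( \alpha = 0 \) and \( \alpha = 2 \) and then cover the intermediate range \( \alpha \in (0,2) \) by interpolation. For \( \alpha = 0 \) the assertion is exactly the \( L^q \)-stability of the \( L^2 \)-projection on quasi-uniform meshes, i.e. \( \norm{P_h u}_{L^q} \leqslant c\norm{u}_{L^q} \) uniformly in \( h \), which is the content of \cite{Douglas1975}. The whole argument therefore reduces to the endpoint \( \alpha = 2 \) and to a uniform-in-\( h \) interpolation identity for the discrete scale; the latter is where I expect the real work to lie.

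For the endpoint \( \alpha = 2 \), recall that \( \norm{u}_{\dot H^{2,q}} \) is equivalent to \( \norm{(-\Delta)u}_{L^q} \), while \( \norm{P_h u}_{\dot H_h^{2,q}} = \norm{(-\Delta_h)P_h u}_{L^q} \). I would introduce the Ritz (elliptic) projection \( R_h u \in X_h \), defined by \( \dual{\nabla R_h u, \nabla v_h} = \dual{\nabla u, \nabla v_h} \) for all \( v_h \in X_h \). Combining this with the defining relation of \( \Delta_h \) and integration by parts yields the key identity
\[
  (-\Delta_h)R_h u = P_h\big((-\Delta)u\big),
\]
so that, by the \( \alpha = 0 \) case, \( \norm{(-\Delta_h)R_h u}_{L^q} = \norm{P_h((-\Delta)u)}_{L^q} \leqslant c\norm{(-\Delta)u}_{L^q} \). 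It then remains to control \( (-\Delta_h)(P_h - R_h)u \). Here I would combine the inverse estimate \( \norm{(-\Delta_h)w_h}_{L^q} \leqslant c h^{-2}\norm{w_h}_{L^q} \) for \( w_h \in X_h \) (a consequence of the inverse inequality on quasi-uniform meshes and the \( L^{q'} \)-stability of \( P_h \)) with the standard \( L^q \)-error bounds \( \norm{u - P_h u}_{L^q} + \norm{u - R_h u}_{L^q} \leqslant c h^2\norm{u}_{\dot H^{2,q}} \). The two powers of \( h \) cancel, giving \( \norm{(-\Delta_h)(P_h - R_h)u}_{L^q} \leqslant c\norm{(-\Delta)u}_{L^q} \), and hence the claim for \( \alpha = 2 \).

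With the two endpoints in hand, I would reach the range \( \alpha \in (0,2) \) by complex interpolation of the single operator \( P_h \), viewed simultaneously as a bounded map \( \dot H^{0,q} \to \dot H_h^{0,q} \) and \( \dot H^{2,q} \to \dot H_h^{2,q} \). Since the continuous Dirichlet Laplacian \( -\Delta \) on \( L^q \) (\( q \in (1,\infty) \)) has bounded imaginary powers, its fractional-power domains form a complex interpolation scale, so \( [\dot H^{0,q}, \dot H^{2,q}]_{\alpha/2} = \dot H^{\alpha,q} \) with equivalent norms. The decisive point is the discrete counterpart \( [\dot H_h^{0,q}, \dot H_h^{2,q}]_{\alpha/2} = \dot H_h^{\alpha,q} \) \emph{with norm equivalence uniform in} \( h \); equivalently, one may interpolate the analytic family \( z \mapsto (-\Delta_h)^{z}P_h(-\Delta)^{-z} \) by Stein's theorem, for which one needs \( \norm{(-\Delta_h)^{iy}}_{\mathcal L(L^q)} \leqslant c\,e^{\omega|y|} \) with \( c,\omega \) independent of \( h \).

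This last ingredient — the uniform (in \( h \)) bounded imaginary powers, or bounded \( H^\infty \)-calculus, of the discrete Laplacian \( -\Delta_h \) on \( L^q \) — is the heart of the matter and the step I expect to be hardest; it is precisely the property underpinning the discrete maximal \( L^p \)-regularity used elsewhere in this paper, and I would invoke it from that literature rather than reprove it. I note two further technical caveats to be dealt with along the way: the mismatch between \( \mathcal O \) and \( \mathcal O_h \) near the curved boundary (handled by standard boundary-skin arguments) and the \( W^{1,q} \)/\( L^q \) stability and approximation properties of \( P_h \) and \( R_h \) on quasi-uniform meshes, all of which are classical.
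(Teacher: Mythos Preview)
The paper does not actually prove this lemma: it simply prefaces the statement with ``According to the theoretical results in \cite{Douglas1975}, the projection operator $P_h$ has the following well-known stability property'' and leaves it at that. Strictly speaking, \cite{Douglas1975} only covers the case $\alpha=0$ (the $L^q$-stability of the $L^2$-projection), so the paper is treating the full range $\alpha\in[0,2]$ as folklore.

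Your proposal is therefore not a different route to the paper's proof but rather a genuine proof where the paper offers none, and the sketch is correct. The endpoint $\alpha=2$ via the Ritz identity $(-\Delta_h)R_hu=P_h(-\Delta u)$ together with the inverse estimate and the $O(h^2)$ approximation bounds for $P_h$ and $R_h$ in $L^q$ is the standard argument. For the interpolation step, the uniform-in-$h$ identification $[\dot H_h^{0,q},\dot H_h^{2,q}]_{\alpha/2}=\dot H_h^{\alpha,q}$ that you flag as the crux is exactly what the paper records (without proof) among the ``standard properties'' at the end of Subsection~\ref{ssec:pre}, and the underlying uniform bounded $H^\infty$-calculus of $-\Delta_h$ is precisely the result the paper cites from \cite{LiLpSpatail2023} in the proof of Proposition~\ref{pro:DSMLP}. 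So the ingredient you identify as hardest is indeed available in the literature the paper already relies on, and your plan goes through.
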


Referring to the established estimates for the resolvents of $ \Delta_h $
as established in \cite{Bakaev2002}, a standard computation
(for which the relevant techniques can be found in  \cite[Section 7.7 in Chapter 2]{Yagi2010})
yields the following well-recognized inequalities.
\begin{lemma}
  \label{lem:Deltah}
  Let $ q \in (1,\infty) $ and $ 0 \leqslant \alpha \leqslant \beta < \infty $.
  Then the following inequalities hold:
  \begin{align}
    & \norm{e^{t\Delta_h}}_{\mathcal L(\dot H_h^{\alpha,q}, \dot H_h^{\beta,q})}
    \leqslant ct^{(\alpha-\beta)/2}, \quad \forall t \in (0,T]; 
    \label{eq:etDeltah} \\
    & \norm{I-e^{t\Delta_h}}_{\mathcal L(\dot H_h^{\beta,q}, \dot H_h^{\alpha,q})}
    \leqslant ct^{(\beta-\alpha)/2}, \quad \forall t \in [0,T];
    \label{eq:I-etDeltah} \\
    & \norm{(I-\tau\Delta_h)^{-1} - I}_{\mathcal L(\dot H_h^{\beta,q}, \dot H_h^{\alpha,q})}
    \leqslant c\tau^{(\beta-\alpha)/2};
    \label{eq:basic} \\
    & \norm{(I-\tau\Delta_h)^{-m}}_{
      \mathcal L(\dot H_h^{\alpha,q}, \dot H_h^{\beta,q})
    } \leqslant \frac{c}{(m\tau)^{(\beta-\alpha)/2}},
    \quad  \, \forall m \in \mathbb N_{>0}. 
    \label{eq:resolvent} 
  \end{align}
\end{lemma}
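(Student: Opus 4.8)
The plan is to derive all four bounds from a single analytic input: the $h$-uniform resolvent estimate for $\Delta_h$ proved in \cite{Bakaev2002}, which provides an angle $\psi \in (\pi/2,\pi)$ and a constant $C$ independent of $h$ with $\norm{(z-\Delta_h)^{-1}}_{\mathcal L(L^q)} \leqslant C/\snm{z}$ for all $z$ in the sector $\Sigma := \{z \neq 0 : \snm{\arg(-z)} < \pi-\psi\}$ around the negative real axis; equivalently, $-\Delta_h$ is uniformly sectorial on $L^q$, and its spectrum stays bounded away from the origin (uniform coercivity, i.e.\ a discrete Poincar\'e inequality), so the negative powers $(-\Delta_h)^{-s}$ are uniformly bounded. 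Since the $\dot H_h^{\alpha,q}$-norm equals $\norm{(-\Delta_h)^{\alpha/2}\cdot}_{L^q}$ and the fractional powers commute with $e^{t\Delta_h}$ and with $(I-\tau\Delta_h)^{-1}$ through the functional calculus, each mapping-norm estimate reduces, on setting $s := (\beta-\alpha)/2 \geqslant 0$, to an $\mathcal L(L^q)$-bound on a function of $-\Delta_h$:
\[
  \norm{(-\Delta_h)^{s}e^{t\Delta_h}},\quad
  \norm{(I-e^{t\Delta_h})(-\Delta_h)^{-s}},\quad
  \norm{\big((I-\tau\Delta_h)^{-1}-I\big)(-\Delta_h)^{-s}},\quad
  \norm{(-\Delta_h)^{s}(I-\tau\Delta_h)^{-m}},
\]
corresponding to \eqref{eq:etDeltah}--\eqref{eq:resolvent} respectively.

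For \eqref{eq:etDeltah} I would use the Dunford--Cauchy representation $(-\Delta_h)^{s}e^{t\Delta_h} = \frac{1}{2\pi i}\int_\Gamma (-z)^{s}e^{tz}(z-\Delta_h)^{-1}\,\mathrm{d}z$, where $\Gamma$ is the boundary of a sector around the negative real axis with half-angle in $(\pi/2,\psi)$ so that $\mathrm{Re}\,z<0$ on the unbounded rays and $e^{tz}$ decays; inserting the resolvent bound and substituting $z=w/t$ yields the factor $t^{-s}$ together with the convergent integral $\int_\Gamma \snm{w}^{s-1}\snm{e^{w}}\,\snm{\mathrm{d}w}$, giving an $h$-independent constant. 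Estimate \eqref{eq:resolvent} is analogous, with $f(z)=(-z)^{s}(1-\tau z)^{-m}$ (bounded and analytic on the sector since $1-\tau z$ stays away from $0$): the substitution $w=-\tau z$ turns the bound into $\tau^{-s}\int_{\Gamma'}\snm{w}^{s-1}\snm{1+w}^{-m}\,\snm{\mathrm{d}w}$, and the elementary Beta-type bound $\int_{\Gamma'}\snm{w}^{s-1}\snm{1+w}^{-m}\,\snm{\mathrm{d}w}\leqslant C_s\,m^{-s}$ produces the claimed $(m\tau)^{-s}$. Both hold for every $s\geqslant 0$.

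The two difference-type estimates \eqref{eq:I-etDeltah} and \eqref{eq:basic} I would treat, for $s=(\beta-\alpha)/2\in[0,1]$, through the algebraic identities $I-e^{t\Delta_h}=\int_0^t (-\Delta_h)e^{r\Delta_h}\,\mathrm{d}r$ and $(I-\tau\Delta_h)^{-1}-I=\tau\Delta_h(I-\tau\Delta_h)^{-1}$. The first gives $(I-e^{t\Delta_h})(-\Delta_h)^{-s}=\int_0^t (-\Delta_h)^{1-s}e^{r\Delta_h}\,\mathrm{d}r$, and \eqref{eq:etDeltah} (with exponent $1-s$) bounds the integrand by $c\,r^{s-1}$, whose integral over $(0,t)$ is $c\,t^{s}/s$. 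The second gives $\big((I-\tau\Delta_h)^{-1}-I\big)(-\Delta_h)^{-s}=-\tau(-\Delta_h)^{1-s}(I-\tau\Delta_h)^{-1}$, whose scalar multiplier scales under $w=\tau\lambda$ to $\tau^{s}\,w^{1-s}/(1+w)$, with finite supremum over the sector precisely when $s\in[0,1]$; this is the range in which these two bounds are applied below.

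I expect the main obstacle to be organizational rather than conceptual, namely guaranteeing that every constant emerging from these contour integrals is genuinely independent of $h$. This is automatic here, because the only $h$-dependent ingredient---the resolvent $(z-\Delta_h)^{-1}$---enters solely through Bakaev's uniform bound $C/\snm{z}$ and the uniform spectral gap, while all remaining factors are scalar functions on a fixed sector. The most delicate of the four is the fully discrete estimate \eqref{eq:resolvent}, where the $(m\tau)^{-s}$ scaling must be extracted uniformly in the power $m$; this is exactly the rational-approximation estimate underlying backward-Euler stability, and the Beta-type bound on $\snm{w}^{s-1}\snm{1+w}^{-m}$ supplies it. The genuinely analytic work---the $h$-uniform resolvent bound itself---is imported from \cite{Bakaev2002}, so within this lemma the effort is confined to the functional-calculus reductions above.
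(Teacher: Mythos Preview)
Your proposal is correct and follows exactly the route the paper indicates: the paper gives no detailed argument but simply cites the $h$-uniform resolvent bounds of \cite{Bakaev2002} and refers to the sectorial-operator/functional-calculus techniques in \cite[Chapter~2, Section~7.7]{Yagi2010}, which is precisely the Dunford--Cauchy contour-integral machinery you have spelled out. Your remark that \eqref{eq:I-etDeltah} and \eqref{eq:basic} are established only for $(\beta-\alpha)/2\in[0,1]$ is in fact more careful than the paper's stated hypothesis and is consistent with every use of the lemma in the paper.
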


We have the following fundamental estimate for the stochastic integrals;
see, e.g., \cite[pp.~792]{Neerven2012}. 
\begin{lemma}
  \label{lem:stoch}
  Let $ p \in (1,\infty) $ and $ q \in [2,\infty) $.
  There exists a positive constant $ c $ such that
  for any $ g \in L_\mathbb F^p(\Omega;L^2(0,T;\gamma(H,L^q))) $,
  the following inequality holds:
  \[
    \normB{\int_0^T g(t)\, \mathrm{d}W_H(t)}_{L^p(\Omega;L^q)}
    \leqslant c \norm{g}_{L^p(\Omega;L^2(0,T;\gamma(H;L^q)))}.
  \]
\end{lemma}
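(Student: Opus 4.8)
The plan is to deduce the stated one-sided estimate from the general theory of stochastic integration in UMD Banach spaces, specialized to \( E = L^q \) with \( q \in [2,\infty) \), and then to reduce the resulting \( \gamma \)-radonifying norm over \( L^2(0,T;H) \) to the simpler \( L^2 \)-in-time \( \gamma(H,L^q) \)-norm appearing on the right-hand side. The crucial structural facts are that \( L^q \) is a UMD space for every \( q \in (1,\infty) \) and, for \( q \geqslant 2 \), has type \( 2 \); the latter is precisely what makes the target norm \( \norm{g}_{L^p(\Omega;L^2(0,T;\gamma(H,L^q)))} \) dominate the intrinsic \( \gamma \)-norm.

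First I would invoke the Burkholder--Davis--Gundy-type inequality for stochastic integrals with respect to \( W_H \) in the UMD space \( L^q \), as developed in \cite{Neerven2007} and recorded in \cite[Chapter~9]{HytonenWeis2017} (cf.~\cite{Neerven2012}): for adapted \( g \),
\[
  \normB{\int_0^T g(t)\,\mathrm dW_H(t)}_{L^p(\Omega;L^q)}
  \leqslant c\,\norm{g}_{L^p(\Omega;\gamma(L^2(0,T;H),L^q))}.
\]
This is the substantive analytic input, valid for all \( p \in (1,\infty) \); I would use only the upper bound. It reduces the whole problem to comparing the space--time \( \gamma \)-norm \( \gamma(L^2(0,T;H),L^q) \) with \( L^2(0,T;\gamma(H,L^q)) \).

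Next I would make this comparison explicit using the square-function description of \( \gamma \)-norms into \( L^q \)-spaces. By the \( L^q \)-version of \cite[Proposition~9.1.9]{HytonenWeis2017}, writing \( (h_n) \) for the orthonormal basis of \( H \), one has
\[
  \norm{g}_{\gamma(L^2(0,T;H),L^q)}
  \eqsim \normB{\Big(\int_0^T \sum_{n} \snm{(g(t)h_n)(\cdot)}^2\,\mathrm dt\Big)^{1/2}}_{L^q},
  \qquad
  \norm{g(t)}_{\gamma(H,L^q)}
  \eqsim \normB{\Big(\sum_n \snm{(g(t)h_n)(\cdot)}^2\Big)^{1/2}}_{L^q}.
\]
Since \( q \geqslant 2 \), raising to the power \( q \) and applying Minkowski's integral inequality in \( L^{q/2} \) (a genuine norm because \( q/2 \geqslant 1 \)) yields, pointwise in \( \Omega \),
\[
  \normB{\Big(\int_0^T \sum_n \snm{(g(t)h_n)(\cdot)}^2\,\mathrm dt\Big)^{1/2}}_{L^q}
  \leqslant \Big(\int_0^T \norm{g(t)}_{\gamma(H,L^q)}^2\,\mathrm dt\Big)^{1/2}
  = \norm{g}_{L^2(0,T;\gamma(H,L^q))}.
\]
Combining this with the previous display and taking the \( L^p(\Omega) \)-norm gives the claim; this chain also shows \( L_\mathbb F^p(\Omega;L^2(0,T;\gamma(H,L^q))) \subset L^p(\Omega;\gamma(L^2(0,T;H),L^q)) \), so that the stochastic integral is well defined for the integrands considered.

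I expect the only real obstacle to be the first step, the UMD-valued BDG inequality, which is a deep result and the reason the statement is quoted from \cite{Neerven2012} rather than proved from scratch; everything after it is an essentially elementary manipulation, with the hypothesis \( q \geqslant 2 \) entering precisely (and only) to validate Minkowski's inequality in \( L^{q/2} \). For \( q < 2 \) the same scheme would break down, since the required inclusion between the two \( \gamma \)-type norms reverses.
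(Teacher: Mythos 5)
Your proof is correct and is essentially the standard argument behind the result: the paper itself gives no proof, simply citing \cite[pp.~792]{Neerven2012}, and the estimate there rests on exactly the chain you describe --- the Burkholder--Davis--Gundy/It\^o isomorphism for stochastic integration in the UMD space $L^q$ from \cite{Neerven2007}, followed by the type-2 embedding $L^2(0,T;\gamma(H,L^q)) \hookrightarrow \gamma(L^2(0,T;H),L^q)$ obtained from the square-function characterization of $\gamma$-norms and Minkowski's integral inequality in $L^{q/2}$. Your remarks on where $q \geqslant 2$ enters, and why the scheme reverses for $q < 2$, are likewise accurate.
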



\begin{lemma}
\label{lem:F-interp}
Suppose \( q \in [2,\infty) \). The following inequalities hold:
\begin{align}
& \norm{F(u)}_{\gamma(H,L^q)} \leqslant c(1 + \norm{u}_{L^q}) && \text{for all } u \in L^q, \label{eq:F-growth} \\
& \norm{F(u)}_{\gamma(H,\dot H^{1,q})} \leqslant c(1 + \norm{u}_{\dot H^{1,q}}) && \text{for all } u \in \dot H^{1,q}, \label{eq:F-growth-H1} \\
& \norm{F(u_1) - F(u_2)}_{\gamma(H,L^q)} \leqslant c \norm{u_1-u_2}_{L^q} && \text{for all } u_1,u_2 \in L^q. \label{eq:F-lips}
\end{align}
\end{lemma}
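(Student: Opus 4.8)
The plan is to establish each of the three inequalities in \cref{lem:F-interp} by exploiting the explicit series representation of $F$ together with the defining properties \eqref{eq:fn-growth} of the functions $f_n$, and by using the characterization of $\gamma$-radonifying norms. The key structural fact is that $F(u) = \sum_n \sqrt{\lambda_n}\, h_n \otimes f_n(\cdot, u(\cdot))$, so that for any Banach space $E$ built on $\mathcal O$ the Hilbert--Schmidt/$\gamma$-radonifying norm $\norm{F(u)}_{\gamma(H,E)}$ is controlled, via \cite[Proposition~9.1.9]{HytonenWeis2017}, by a square-function expression of the form $\bignorm{\big(\sum_n \lambda_n\, |f_n(\cdot,u(\cdot))|^2\big)^{1/2}}_{E}$. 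This is exactly the device already used in the excerpt to derive the $L^2$ growth bound, and the summability condition $\sum_n \lambda_n < \infty$ is what makes all the resulting constants finite.

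First I would prove the $L^q$ growth bound \eqref{eq:F-growth}. Applying \cite[Proposition~9.1.9]{HytonenWeis2017} with $E = L^q$ reduces the claim to estimating the $L^q$-norm of the square function $\big(\sum_n \lambda_n |f_n(\cdot,u)|^2\big)^{1/2}$. The linear growth bound in \eqref{eq:fn-growth}, namely $|f_n(x,y)| \leqslant C_F(1+|y|)$, gives the pointwise estimate $\sum_n \lambda_n |f_n(x,u(x))|^2 \leqslant C_F^2 (\sum_n \lambda_n)(1+|u(x)|)^2$. Taking $L^q$-norms and using the triangle inequality $\norm{1+|u|}_{L^q} \leqslant \norm{1}_{L^q} + \norm{u}_{L^q}$ (with $\norm{1}_{L^q} = \mu(\mathcal O)^{1/q}$ finite since $\mathcal O$ is bounded) yields the desired bound with a constant depending only on $C_F$, $\sum_n\lambda_n$, and $\mu(\mathcal O)$. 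The Lipschitz estimate \eqref{eq:F-lips} follows by the identical argument applied to the differences: the third line of \eqref{eq:fn-growth} gives $\sum_n \lambda_n |f_n(\cdot,u_1)-f_n(\cdot,u_2)|^2 \leqslant C_F^2(\sum_n\lambda_n)|u_1-u_2|^2$ pointwise, and taking $L^q$-norms completes the proof.

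The main obstacle will be the $\dot H^{1,q}$ growth bound \eqref{eq:F-growth-H1}, since differentiating $f_n(\cdot, u(\cdot))$ brings in both the explicit $x$-dependence and the chain-rule term through $u$. Using the equivalence of $\norm{\cdot}_{\dot H^{1,q}}$ with the $W^{1,q}(\mathcal O)$-norm (recorded in \cref{sec:math}), I would again invoke \cite[Proposition~9.1.9]{HytonenWeis2017}, now with $E$ reflecting a first-order Sobolev norm, and control $\nabla_x\big(f_n(x,u(x))\big) = (\nabla_x f_n)(x,u(x)) + (\partial_y f_n)(x,u(x))\,\nabla u(x)$. The spatial-Lipschitz bound (fourth line of \eqref{eq:fn-growth}) controls the first term by $C_F(1+|u|)$, while the $y$-Lipschitz bound (third line) gives $|\partial_y f_n| \leqslant C_F$ and hence controls the second term by $C_F|\nabla u|$. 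Assembling the square function and taking $L^q$-norms, one bounds $\norm{F(u)}_{\gamma(H,\dot H^{1,q})}$ by $c\,(1 + \norm{u}_{L^q} + \norm{\nabla u}_{L^q})$, which is equivalent to $c\,(1+\norm{u}_{\dot H^{1,q}})$. The delicate points are justifying the interchange of the gradient with the infinite sum (handled via the uniform-in-$n$ bounds and $\sum_n\lambda_n<\infty$) and ensuring that the composition $f_n(\cdot,u(\cdot))$ inherits $W^{1,q}$-regularity with zero trace on $\partial\mathcal O$; the latter is guaranteed by $f_n(x,0)=0$ for $x\in\partial\mathcal O$ combined with $u\in\dot H^{1,q}=W^{1,q}_0(\mathcal O)$, so the composite vanishes on the boundary in the trace sense.
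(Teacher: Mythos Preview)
Your proposal is correct and follows precisely the approach the paper has in mind. The paper's own proof is a single sentence deferring to straightforward calculations based on the definition of $F$ (and citing Lemma~4.5 of \cite{LiZhou2024A} for details); you have simply written out those calculations explicitly, using \cite[Proposition~9.1.9]{HytonenWeis2017} together with the pointwise bounds in \eqref{eq:fn-growth}, which is exactly the intended route.
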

\begin{proof}
These properties are derived from straightforward calculations based on the definition of \( F \). For a detailed derivation, see Lemma~4.5 in \cite{LiZhou2024A}.
\end{proof}

We have the following discrete deterministic maximal $ L^p $-regularity
estimate; see \cite{Kemmochi2016,Libuyang2015}.
\begin{lemma}
  \label{lem:DMLP}
  Let $ p,q \in (1,\infty) $. Assume that \((Z_j)_{j=0}^J \) is
  the solution of the discretization
  \begin{numcases}{}
    Z_{j+1} - Z_j = \tau \Delta_h Z_{j+1} +
    P_h\int_{t_j}^{t_{j+1}} g(t) \, \mathrm{d}t, \quad 0 \leqslant j < J, \notag \\
    Z_0 = 0, \notag
  \end{numcases}
  where $ g \in L^p(0,T;L^q) $. Then the following stability estimate holds:
  \[
    \Big(
      \tau\sum_{j=1}^J \norm{Z_j}_{\dot H_h^{2,q}}^p 
    \Big)^{1/p} \leqslant
    c \norm{g}_{L^p(0,T;L^q)}.
  \]
\end{lemma}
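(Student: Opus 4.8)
The plan is to reduce the estimate to the $\ell^p$-boundedness of a discrete operator-valued Fourier multiplier and then lean on the uniform $R$-sectoriality of the discrete Laplacian. First I would solve the recursion explicitly. Writing $R := (I-\tau\Delta_h)^{-1}$ and $G_k := \int_{t_k}^{t_{k+1}} g(t)\,\mathrm{d}t$, the scheme reads $(I-\tau\Delta_h)Z_{j+1} = Z_j + P_h G_j$, so with $Z_0 = 0$ induction gives
\[
  Z_j = \sum_{k=0}^{j-1} R^{\,j-k} P_h G_k, \qquad 1 \leqslant j \leqslant J.
\]
Setting $\bar g_k := \tau^{-1} G_k$ and applying $-\Delta_h$, the quantity to be bounded becomes the discrete convolution
\[
  -\Delta_h Z_j = \tau \sum_{k=0}^{j-1} a_{j-k}\, P_h \bar g_k,
  \qquad a_m := \tau(-\Delta_h) R^{\,m}.
\]
Since $\tau\sum_k \norm{\bar g_k}_{L^q}^p \leqslant \norm{g}_{L^p(0,T;L^q)}^p$ by Jensen's inequality, and since $P_h$ is bounded on $L^q$ uniformly in $h$ by \cref{lem:Ph-stab}, it suffices to show that the causal convolution operator with kernel $(a_m)_{m\geqslant 1}$ is bounded on the weighted sequence space $\ell_\tau^p(L^q)$, uniformly in $\tau$ and $h$.

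Next I would recognise this as a discrete singular-integral problem. Because $L^q$ with $q\in(1,\infty)$ is a UMD space, boundedness on $\ell_\tau^p(L^q)$ follows from a discrete operator-valued Fourier multiplier theorem of the type used in \cite{Kemmochi2016}, provided the transfer symbol
\[
  \widehat a(z) := \sum_{m\geqslant 1} z^m a_m = z(I-R)(I-zR)^{-1}, \qquad z = e^{i\theta},\ \theta \in (-\pi,\pi]\setminus\{0\},
\]
together with the derivative family prescribed by the discrete Mikhlin–Marcinkiewicz condition, is $R$-bounded uniformly in $\tau,h$. Writing $A_h := -\Delta_h$ and using $I-R = \tau A_h(I+\tau A_h)^{-1}$ and $I - zR = (I+\tau A_h)^{-1}\big((1-z)I + \tau A_h\big)$, a short computation collapses the symbol to
\[
  \widehat a(z) = z\, A_h\big(A_h + \zeta I\big)^{-1} = z\big(I - \zeta(A_h+\zeta I)^{-1}\big), \qquad \zeta := \tau^{-1}(1-z).
\]
As $z=e^{i\theta}$ traces the punctured unit circle, $\zeta$ runs through the open right half-plane with $\arg\zeta$ confined to a sector of half-angle $<\pi/2$. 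Hence the required $R$-boundedness of $\{\widehat a(e^{i\theta})\}$, and of the derivative family (which is built from products and second resolvent powers of $A_h$), is precisely the assertion that $A_h$ is $R$-sectorial on $L^q$ of angle $<\pi/2$, with $R$-bound independent of $h$.

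The main obstacle is therefore the \emph{uniform-in-$h$} $R$-sectoriality of the finite element Laplacian $-\Delta_h$ on $L^q$. The scalar resolvent bounds underlying \cref{lem:Deltah} (from Bakaev's estimates) yield sectoriality uniformly in $h$, but upgrading this to $R$-sectoriality on the UMD lattice $L^q$ requires extra structure — most conveniently the uniform-in-$h$ Gaussian heat-kernel bounds for the discrete semigroup $e^{t\Delta_h}$ on a quasi-uniform mesh, which let one dominate the resolvent family by a positive operator and pass to $R$-boundedness through Khintchine's inequality in $L^q$. Establishing these kernel bounds uniformly in $h$ is the delicate, mesh-dependent part, and it is exactly what the cited works \cite{Kemmochi2016,Libuyang2015} deliver. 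Once the uniform $R$-sectoriality is available, the multiplier theorem yields the $\ell_\tau^p(L^q)$-bound for the convolution operator, and combining this with the reductions of the first paragraph gives the claimed estimate.
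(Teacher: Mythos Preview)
The paper gives no proof of this lemma and simply refers to \cite{Kemmochi2016,Libuyang2015}. Your sketch is correct and is precisely the strategy carried out there: writing the scheme as a discrete convolution, applying a Blunck-type operator-valued $\ell^p$-multiplier theorem on the UMD space $L^q$, and reducing the multiplier condition for the symbol $\widehat a(e^{i\theta})$ (and its weighted derivative) to the uniform-in-$h$ $R$-sectoriality of $-\Delta_h$ of angle $<\pi/2$, the latter being supplied by the resolvent and heat-kernel bounds of \cite{Libuyang2015}.
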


Finally, we present some standard properties of the space $\dot H_h^{\alpha,q}$, where $\alpha \in \mathbb{R}$ and $q \in (1,\infty)$.
For any $q \in (1,\infty)$, there exist two positive constants $c_0$ and $c_1$, independent of $h$, such that for all $u_h \in X_h$, the following inequality holds:
\[
c_0\norm{u_h}_{\dot H^{1,q}} \leqslant \norm{u_h}_{\dot H_h^{1,q}} \leqslant c_1\norm{u_h}_{\dot H^{1,q}}.
\]
For any $\alpha \in (0,2)$ and $q \in (1,\infty)$, there exist two positive constants $c_2$ and $c_3$, independent of $h$, such that for all $u_h \in X_h$, the following inequality holds:
\[
c_2\norm{u_h}_{\dot H_h^{\alpha,q}} \leqslant \norm{u_h}_{[\dot H_h^{0,q}, \dot H_h^{2,q}]_{\alpha/2}} \leqslant c_3\norm{u_h}_{\dot H_h^{\alpha,q}},
\]
where $[\dot H_h^{0,q}, \dot H_h^{2,q}]_{\alpha/2}$ is the complex interpolation space between $\dot H_h^{0,q}$ and $\dot H_h^{2,q}$ (see \cite[Section 5 in Chapter 1]{Yagi2010}).
Furthermore, for any $\beta \in [3/2, \infty)$ and $q \in [2, \infty)$,
using the well-known estimate (see, e.g., \cite[Section 8.5]{Brenner2008}),
\[
\norm{\Delta^{-1} - \Delta_h^{-1}P_h}_{\mathcal{L}(L^\beta)} \leqslant ch^2,
\]
combined with the continuous embedding of $\dot{H}^{2, \beta}$ into $L^q$ and the inverse estimate, it follows that the continuous embedding of $\dot{H}_h^{2, \beta}$ into $L^q$ is independent of $h$.
These results will be used throughout the subsequent analysis without further explicit reference.

\subsection{A stability estimate for a discrete stochastic convolution}
\label{subsec:stability}

The objective of this subsection is to establish the stability estimate
\begin{equation}
  \label{eq:DMINEQ}
  \bigg(
    \mathbb{E}\Big[
      \max_{1 \leqslant j \leqslant J} \norm{Z_j}_{L^{q}}^p
    \Big]
  \bigg)^{1/p} \leqslant
  c \norm{g_h}_{L^{p}(\Omega \times (0,T); \gamma(H, \dot{H}_h^{0,q}))}
\end{equation}
for the discrete stochastic convolution defined by
\begin{equation}
  \label{eq:Zj}
  Z_j = \sum_{k=0}^{j-1} \int_{t_k}^{t_{k+1}}
  (I - \tau \Delta_h)^{k-j} g_h(t) \, \mathrm{d}W_H(t), \quad 1 \leqslant j \leqslant J,
\end{equation}
where \( g_h \) belongs to \( L_\mathbb{F}^p(\Omega \times (0,T); \gamma(H, \dot{H}_h^{0,q})) \),
with parameters \( p \in (2, \infty) \) and \( q \in [2, \infty) \).
This stability estimate is crucial for the derivation of
pathwise uniform convergence rates for the numerical approximations of a
broad class of nonlinear stochastic parabolic equations.

Within the framework of Hilbert spaces, we note that
Gy\"ongy and Millet \cite[Theorem 2.6]{Gyongy2009} have
established the following stability estimate:
\[
\left( \mathbb{E} \left[ \max_{1 \leqslant j \leqslant J} \| Z_j \|_{L^2}^2 \right] \right)^{1/2} \leqslant c \left\| g \right\|_{L^2(\Omega \times (0,T); \gamma(H, L^2))}.
\]
Moreover, under the Banach space setting, a significant advancement was made by
van Neerven and Veraar \cite[Proposition 5.4]{Neerven2022}, 
who demonstrated the following sharp estimate for a discrete stochastic convolution
without spatial discretization:
\[
  \left( \mathbb{E} \left[ \max_{1 \leqslant j \leqslant J}
      \Big\| \sum_{k=0}^{j-1} \int_{t_k}^{t_{k+1}} (I - \tau\Delta)^{k-j} g(t)
  \, \mathrm{d}W_H(t) \Big\|_{L^q}^p \right] \right)^{1/p}
  \leqslant c \left\| g \right\|_{L^p(\Omega; L^2(0,T; \gamma(H, L^q)))},
\]
holds for all $ g \in L_\mathbb{F}^p(\Omega; L^2(0,T; \gamma(H, L^q))) $
with $ p,q \in [2,\infty) $. The derivation of this estimate relies on
the contraction property of the operator $ (I-\tau\Delta)^{-1} $ on
$ L^q $ for all $ q \in [2,\infty) $. However, it is important to note
that $ (I-\tau\Delta_h)^{-1} $ fails to possess 
the contraction property on $ \dot H_h^{0,q} $ for any $ q \in (2,\infty) $.
Hence, Proposition~5.4 from \cite{Neerven2012} can not be directly applied to
bound the discrete stochastic convolution defined by \cref{eq:Zj}.

To address the aforementioned challenge, we employ the discrete stochastic maximal $ L^p $-regularity
estimate for 
\[
   \tau \sum_{j=1}^J \mathbb E \norm{Z_j}_{\dot H_h^{1,q}}^p,
   \quad p \in (2,\infty), \, q \in [2,\infty),
\]
coupled with the approximation property of $ (I-\tau\Delta_h)^{-1} $ to the identity operator
$ I $ as presented in \cref{eq:basic}.
We formally state the discrete stochastic maximal $ L^p $-regularity estimate
in the following proposition.
\begin{proposition}
  \label{pro:DSMLP}
  Let $ p \in (2,\infty) $ and $ q \in [2,\infty) $.
  Assume that $ (Z_j)_{j=1}^J $ is defined by \cref{eq:Zj},
  where $ g_h \in L_\mathbb F^p(\Omega\times(0,T); \gamma(H,\dot H_h^{0,q})) $.
  Then the following discrete stochastic
  maximal $ L^p $-regularity estimate holds:
  \[
    \Bigg(
      \mathbb E\bigg[\tau \sum_{j=1}^J \norm{Z_j}_{\dot H_h^{1,q}}^p \bigg]
    \Bigg)^{1/p} \leqslant
    c \norm{g_h}_{L^p(\Omega\times(0,T);\gamma(H,\dot H_h^{0,q}))}.
  \]
\end{proposition}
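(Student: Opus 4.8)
The plan is to treat \cref{pro:DSMLP} as a discrete-in-time, discrete-in-space stochastic maximal $L^p$-regularity estimate with constant uniform in $h$ and $\tau$. First I would record that $(Z_j)_{j=0}^J$ is the backward-Euler stochastic convolution: differencing \cref{eq:Zj} shows that, with $Z_0=0$,
\[
  Z_{j+1} - Z_j = \tau\Delta_h Z_{j+1} + \int_{t_j}^{t_{j+1}} g_h(t)\,\mathrm{d}W_H(t).
\]
The model case $p=q=2$ already exposes the mechanism and is elementary: by It\^o's isometry and reindexing,
\[
  \tau\sum_{j=1}^J\mathbb E\norm{Z_j}_{\dot{H}_h^{1,2}}^2 = \sum_{k=0}^{J-1}\int_{t_k}^{t_{k+1}}\mathbb E\Big[\tau\sum_{m=1}^{J-k}\norm{(-\Delta_h)^{1/2}(I-\tau\Delta_h)^{-m}g_h(t)}_{\gamma(H,L^2)}^2\Big]\,\mathrm{d}t,
\]
and the spectral multiplier $\tau\sum_{m\geqslant1}\mu(1+\tau\mu)^{-2m}=\frac{1}{2+\tau\mu}\leqslant\frac12$ bounds the inner sum by $\tfrac12\norm{g_h(t)}_{\gamma(H,L^2)}^2$, uniformly in the eigenvalue $\mu\geqslant0$ and in $\tau$. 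The whole substance lies in the passage to general $p\in(2,\infty)$, $q\in[2,\infty)$, where It\^o's isometry is unavailable.

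It is instructive to see why the direct route fails, since this dictates the method. Applying \cref{lem:stoch} termwise and bounding the operators by their norms via \cref{eq:resolvent} gives
\[
  \mathbb E\norm{Z_j}_{\dot{H}_h^{1,q}}^p \leqslant c\,\mathbb E\Big(\sum_{k=0}^{j-1}\frac{1}{(j-k)\tau}\int_{t_k}^{t_{k+1}}\norm{g_h(t)}_{\gamma(H,L^q)}^2\,\mathrm{d}t\Big)^{p/2},
\]
and summing $\tau\sum_j$ leaves the discrete convolution $a\mapsto\big(\sum_{k<j}(j-k)^{-1}a_k\big)_j$ acting on the nonnegative sequence $a_k=\tau^{-1}\int_{t_k}^{t_{k+1}}\norm{g_h}_{\gamma(H,L^q)}^2$. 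Because the kernel $m\mapsto1/m$ lies only in weak $\ell^1$ and there is no cancellation among nonnegative entries, this operator is unbounded on $\ell^{p/2}$ by a logarithmic factor. Hence operator-norm estimates cannot suffice, and the square-function structure of the stochastic integral must be exploited.

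Accordingly, the main strategy is transference to continuous-time stochastic maximal $L^p$-regularity. The two ingredients are: (i) $-\Delta_h$ admits a bounded $H^\infty$-calculus of some angle $<\pi/2$ on $\dot{H}_h^{0,q}$, uniformly in $h$ — the same $R$-sectoriality underlying the discrete deterministic estimate of \cref{lem:DMLP} — and (ii) $L^q$ has Pisier's property $(\alpha)$. Through the van Neerven--Veraar--Weis theory these give, uniformly in $h$, continuous-time stochastic maximal $L^p$-regularity for $-\Delta_h$: the convolution $u_h(t)=\int_0^t e^{(t-s)\Delta_h}\widetilde g_h(s)\,\mathrm{d}W_H(s)$ satisfies $\norm{(-\Delta_h)^{1/2}u_h}_{L^p(\Omega\times(0,T);L^q)}\leqslant c\norm{\widetilde g_h}_{L^p(\Omega\times(0,T);\gamma(H,L^q))}$. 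I would then transfer this to the backward-Euler powers through the subordination identity
\[
  (I-\tau\Delta_h)^{-m} = \frac{1}{(m-1)!\,\tau^m}\int_0^\infty s^{m-1}\,e^{-s/\tau}\,e^{s\Delta_h}\,\mathrm{d}s,
\]
which represents the discrete square function $\{\sqrt\tau\,(-\Delta_h)^{1/2}(I-\tau\Delta_h)^{-m}\}_{m\geqslant1}$ as an average of the continuous one $\{t\mapsto(-\Delta_h)^{1/2}e^{t\Delta_h}\}$. Since the rational function $(1-z)^{-1}$ is $A$-acceptable of order one, this domination can be made $\gamma$-bounded uniformly in $\tau$ and $h$; converting the resulting $\gamma$-boundedness back into the $L^p(\Omega)$ estimate for $(\tau\sum_{j=1}^J\norm{Z_j}_{\dot{H}_h^{1,q}}^p)^{1/p}$ via the (two-sided) stochastic-integral--$\gamma$-norm correspondence in $L^q$ yields the claim.

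The hard part will be this transference with constants uniform in both $\tau$ and $h$. Unlike \cref{lem:DMLP}, scalar resolvent bounds such as \cref{eq:resolvent} are useless here — the logarithmic failure above shows precisely this — and what is genuinely needed is $\gamma$-boundedness of the discrete resolvent family $\{(I-\tau\Delta_h)^{-m}\}$ on $\dot{H}_h^{0,q}$, which rests on the $h$-uniform bounded $H^\infty$-calculus of $-\Delta_h$ together with a careful discrete-to-continuous square-function comparison. Establishing that $h$-uniformity — equivalently, the uniform $R$-sectoriality of $-\Delta_h$ on $\dot{H}_h^{0,q}$ for every $q\in[2,\infty)$ — is the technical core of the argument; once it is in place, the remaining steps reduce to routine manipulations with $\gamma$-norms.
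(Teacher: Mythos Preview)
Your proposal is correct and follows essentially the same route as the paper: the paper's proof simply cites two external results---the $h$-uniform bounded $H^\infty$-calculus of $-\Delta_h$ on $L^q$ (from \cite{LiLpSpatail2023}) and a general theorem deducing discrete stochastic maximal $L^p$-regularity for the backward Euler scheme from a bounded $H^\infty$-calculus (from \cite{LiXieLpTime2023})---and you have correctly identified both ingredients, sketching in addition the transference-via-subordination argument that underlies the second citation. Your diagnosis that the $h$-uniform $H^\infty$-calculus is the technical core is exactly right.
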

\begin{proof}
  Note that \cite[Theorem~3.1]{LiLpSpatail2023} has demonstrated that the
  $H^\infty$-calculus of $-\Delta_h$ is uniformly bounded with respect to
  the spatial mesh size $h$. Subsequently, applying \cite[Theorem~3.2]{LiXieLpTime2023},
  we establish the desired discrete stochastic maximal $L^p$-regularity estimate.
\end{proof}

Now we present the main result of this subsection in the form of the following proposition.
\begin{proposition}
  \label{prop:DMINEQ}
  Let $ p \in (2,\infty) $ and $ q \in [2,\infty) $.
  Suppose that $ g_h \in L_\mathbb F^{p}(\Omega\times(0,T);\gamma(H,\dot H_h^{0,q})) $,
  and let $ (Z_j)_{j=1}^J $ be defined by \cref{eq:Zj}.
  Then the stability estimate \cref{eq:DMINEQ} holds.
\end{proposition}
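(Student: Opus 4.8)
The plan is to transport the Da Prato--Kwapie\'n--Zabczyk factorization argument to the fully discrete setting. Write $R := (I-\tau\Delta_h)^{-1}$ and $\xi_k := \int_{t_k}^{t_{k+1}} g_h(t)\,\mathrm{d}W_H(t)$, so that \cref{eq:Zj} becomes $Z_j = \sum_{k=0}^{j-1} R^{\,j-k}\xi_k$. Fix $\alpha \in (1/p,1/2)$; this interval is nonempty \emph{precisely because} $p>2$, and this is exactly where the hypothesis on $p$ is consumed. I introduce the auxiliary process
\[
  Y_m := \sum_{k=0}^{m-1} \big((m-k)\tau\big)^{-\alpha}\, R^{\,m-k}\xi_k, \qquad 1 \leqslant m \leqslant J,
\]
a discrete stochastic convolution carrying the singular temporal weight $\big((m-k)\tau\big)^{-\alpha}$. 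Inserting this into the weighted deterministic sum $\sum_{m=1}^{j} \tau\,\big((j-m+1)\tau\big)^{\alpha-1} R^{\,j-m} Y_m$ and interchanging the order of summation, the operators recombine as $R^{\,j-m}R^{\,m-k}=R^{\,j-k}$ while the scalar weights collapse to a discrete Beta-type sum that is bounded above and below by positive constants uniformly in the indices. Hence, up to a uniformly invertible factor (equivalently, after replacing the power weights by the exact coefficients of $(1-z)^{-\alpha}$ and $(1-z)^{-(1-\alpha)}$, whose discrete convolution is identically one), I recover the factorization identity $Z_j = c_\alpha \sum_{m=1}^{j} \tau\,\big((j-m+1)\tau\big)^{\alpha-1} R^{\,j-m} Y_m$.

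The second step estimates this pathwise, deterministic reconstruction. Taking $L^q$-norms, bounding $\norm{R^{\,j-m}}_{\mathcal L(\dot H_h^{0,q})} \leqslant c$ by \cref{eq:resolvent} with $\alpha=\beta=0$, and applying H\"older's inequality in the discrete time index $m$ with exponents $p'$ and $p$ relative to the measure $\tau\sum_m$, I obtain
\[
  \norm{Z_j}_{L^q} \leqslant c\Big(\sum_{m=1}^{j}\tau\,\big((j-m+1)\tau\big)^{(\alpha-1)p'}\Big)^{1/p'}\Big(\sum_{m=1}^{J}\tau\norm{Y_m}_{L^q}^{p}\Big)^{1/p}.
\]
Since $\alpha>1/p$, the partial sum $\sum_{n=1}^{j} n^{(\alpha-1)p'}$ grows like $j^{\,1+(\alpha-1)p'}$, which is compensated by the prefactor $\tau^{1+(\alpha-1)p'}$ through $\tau j \leqslant T$; thus the first factor is bounded uniformly in $h$ and $\tau$, and $\max_{1\leqslant j\leqslant J}\norm{Z_j}_{L^q} \leqslant c\big(\tau\sum_{m=1}^{J}\norm{Y_m}_{L^q}^{p}\big)^{1/p}$. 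Taking $L^p(\Omega)$-norms reduces \cref{eq:DMINEQ} to the auxiliary stochastic estimate
\[
  \Big(\mathbb E\,\tau\sum_{m=1}^{J}\norm{Y_m}_{L^q}^{p}\Big)^{1/p} \leqslant c\,\norm{g_h}_{L^p(\Omega\times(0,T);\gamma(H,\dot H_h^{0,q}))}.
\]

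Proving this auxiliary estimate is the main obstacle, and it is here that \cref{pro:DSMLP} and the approximation property \cref{eq:basic} enter. For the non-discretized operator, van Neerven and Veraar~\cite{Neerven2022} deduce the analogous bound at once from the $L^q$-contractivity of the resolvent; since $(I-\tau\Delta_h)^{-1}$ is not an $L^q$-contraction for $q>2$, this route is closed, and the singular temporal weight $\big((m-k)\tau\big)^{-\alpha}$ must instead be absorbed by the spatial regularity gain supplied by the discrete stochastic maximal $L^p$-regularity estimate of \cref{pro:DSMLP}. The mechanism is to rewrite the weighted convolution through resolvent differences $R-I$, each of which trades one spatial derivative for a factor $\tau^{1/2}$ from $\dot H_h^{1,q}$ to $\dot H_h^{0,q}$ by \cref{eq:basic}, the remaining lag-dependent smoothing being controlled by \cref{eq:etDeltah,eq:resolvent}; the spatial derivative so produced is exactly what \cref{pro:DSMLP} bounds in $\ell^p$. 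The restriction $\alpha<1/2$ guarantees that the temporal kernels generated in this exchange are square-summable in the lag variable, as required by It\^o's isometry and, at the level of individual stochastic integrals, by \cref{lem:stoch}. The delicate feature, which the contraction argument would otherwise grant for free, is to carry out this temporal-weight-to-spatial-regularity exchange with all constants uniform in both $h$ and $\tau$; assembling it with the first two steps proves the stability estimate \cref{eq:DMINEQ}.
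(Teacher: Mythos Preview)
Your factorization approach is correct and genuinely different from the paper's proof. The paper proceeds by an It\^o-formula/energy argument: it writes $Z_{j+1}=Z_j+M_j$ with $M_j=\big[(I-\tau\Delta_h)^{-1}-I\big]Z_j+\int_{t_j}^{t_{j+1}}(I-\tau\Delta_h)^{-1}g_h\,\mathrm{d}W_H$, Taylor-expands $\theta\mapsto\|Z_j+\theta M_j\|_{L^q}^p$, and bounds four resulting terms using the Burkholder--Davis--Gundy inequality, the approximation property \cref{eq:basic}, and, crucially, the discrete stochastic maximal $L^p$-regularity estimate of \cref{pro:DSMLP} to control $\tau\sum_j\|Z_j\|_{\dot H_h^{1,q}}^p$. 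Your Da~Prato--Kwapie\'n--Zabczyk factorization avoids the pointwise Taylor expansion entirely, and your first two steps (the factorization identity and the deterministic reconstruction estimate via H\"older) are clean and correct.

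However, you have misidentified the difficulty in your third step. The auxiliary estimate
\[
  \Big(\mathbb E\,\tau\sum_{m=1}^{J}\norm{Y_m}_{L^q}^{p}\Big)^{1/p}
  \leqslant c\,\norm{g_h}_{L^p(\Omega\times(0,T);\gamma(H,\dot H_h^{0,q}))}
\]
does \emph{not} require contractivity of $R$, nor \cref{pro:DSMLP}, nor the resolvent-difference exchange you sketch. Power-boundedness $\|R^{n}\|_{\mathcal L(\dot H_h^{0,q})}\leqslant c$---which is exactly \cref{eq:resolvent} with $\alpha=\beta=0$, and which you already invoked in Step~2---is all that is needed. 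Apply \cref{lem:stoch} to each $Y_m$ and bound the integrand pointwise by $c\,((m-k)\tau)^{-\alpha}\|g_h(t)\|_{\gamma(H,L^q)}$; this gives $\mathbb E\|Y_m\|_{L^q}^p\leqslant c\,\mathbb E\big(\sum_{k<m}((m-k)\tau)^{-2\alpha}\phi_k\big)^{p/2}$ with $\phi_k:=\int_{t_k}^{t_{k+1}}\|g_h\|_{\gamma(H,L^q)}^2\,\mathrm{d}t$. Now sum in $m$ and apply Young's convolution inequality $\ell^1\ast\ell^{p/2}\hookrightarrow\ell^{p/2}$: since $2\alpha<1$, the kernel satisfies $\sum_{n=1}^J(n\tau)^{-2\alpha}\leqslant c\,T^{1-2\alpha}\tau^{-1}$, and H\"older on each subinterval gives $\sum_k\phi_k^{p/2}\leqslant\tau^{p/2-1}\int_0^T\|g_h\|_{\gamma(H,L^q)}^p\,\mathrm{d}t$; the powers of $\tau$ cancel exactly. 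The contractivity hypothesis in \cite{Neerven2022} is used in \emph{their} direct argument for the maximal bound on $Z_j$ itself (via $2$-smoothness of $L^q$), not in the factorization route you are following. With this correction your proof becomes strictly more elementary than the paper's, since it uses only \cref{eq:resolvent}, \cref{lem:stoch}, and Young's inequality, whereas the paper's argument genuinely relies on \cref{pro:DSMLP}.
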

\begin{proof}
  Let $ Z_0 := 0 $ and, for any $t \in [0,T] $, define
  \[
    \Psi(t) := (I-\tau\Delta_h)^{-1}g_h(t), \quad
    \mathcal G(t) := p\sum_{j=0}^{J-1}
    \mathbbm{1}_{[t_j,t_{j+1})}(t)
    \norm{Z_j}_{L^q}^{p-q}
    \left\langle\snm{Z_j}^{q-2} Z_j, \, \Psi(t)\right\rangle,
  \]
  where $ \mathbbm{1}_{[t_j,t_{j+1})} $ is the indicator function for
  the time interval $ [t_j,t_{j+1}) $.
  For any $ 0 \leqslant j < J $, we also define
  \[
    \delta_j := (I-\tau\Delta_h)^{-1}Z_j - Z_j, 
    \quad
    M_j := \delta_j + \int_{t_j}^{t_{j+1}} \Psi(t) \, \mathrm{d}W_H(t).
  \]
  It can be verified that
  \begin{align*}
    Z_{j+1} = Z_j + M_j \quad \text{for all $0 \leqslant j < J$}.
  \end{align*}
  The rest of the proof is divided into the following five steps.

  \textbf{Step 1}. Let us prove
  \begin{equation}
    \label{eq:729}
    \mathbb E \Big[
      \max_{0 \leqslant j \leqslant J} \norm{Z_j}_{L^q}^p
    \Big] \leqslant
    I_1 + I_2 + I_3 + I_4,
  \end{equation}
  where
  \[
    \hskip -16.5em I_1 := p \mathbb E \bigg[
      \sum_{j=0}^{J-1}
      \norm{Z_j}_{L^q}^{p-q}
      \left|\left\langle\snm{Z_j}^{q-2}Z_j, \, \delta_j\right\rangle\right|
    \bigg], 
    \]
    \[
      \hskip -19.5em I_2 := \mathbb E \bigg[
          \sup_{t \in [0,T]} 
          \snmB{
            \int_0^t \mathcal G(s) \, \mathrm{d}W_H(s)
          }
        \bigg], 
      \]
      \[
        I_3 :=  p|p-q| \mathbb E\bigg[
          \sum_{j=0}^{J-1} \int_0^1 \norm{Z_j+\theta M_j}_{L^q}^{p-2q}
            \dualb{\snm{Z_j+\theta M_j}^{q-2}(Z_j+\theta M_j), \, M_j}^2
            \, \mathrm{d}\theta
  \bigg],
\]
\[
  \hskip -6em
  I_4 := p(q-1) \mathbb E\bigg[ \sum_{j=0}^{J-1} 
      \int_0^1 
      \norm{Z_j+\theta M_j}_{L^q}^{p-q} \dualb{
        \snm{Z_j+\theta M_j}^{q-2}, \, M_j^2
    } \, \mathrm{d}\theta
  \bigg].
\]
  Fix any $ 0 \leqslant j < J $.
  Define the function $ g: [0,1] \to \mathbb R $ by
  \[
    g(\theta) := \normb{ Z_j + \theta M_j }_{L^q}^p,
    \quad \theta \in [0,1].
  \]
  A straightforward computation yields
  \begin{align*}
      & g'(0) = p \norm{Z_j}_{L^q}^{p-q} 
      \dualb{\snm{Z_j}^{q-2}Z_j, \, M_j} \\
      & \qquad = p \norm{Z_j}_{L^q}^{p-q} 
      \dualb{\snm{Z_j}^{q-2}Z_j, \, \delta_j} +
      \int_{t_j}^{t_{j+1}} \mathcal G(t) \, \mathrm{d}W_H(t)
  \end{align*}
  and
  \begin{align*}
      & g''(\theta) 
      = p(p-q) \normb{ Z_j + \theta M_j }_{L^q}^{p-2q} 
      \dualb{
        \snm{Z_j+\theta M_j}^{q-2}(Z_j+\theta M_j),
        \, M_j
      }^2 + {} \\
      & \qquad\qquad\quad p(q-1) \normb{ Z_j + \theta M_j }_{L^q}^{p-q} \dualb{
        \snm{Z_j+\theta M_j}^{q-2}, \, M_j^2
      }, \quad \forall\theta \in [0,1].
  \end{align*}
  Hence, by $ g(1) = g(0)+ g'(0) +
  \int_0^1 (1-\theta) g''(\theta) \mathrm{d}\theta $ and the fact that
  $ g(1) = \norm{Z_{j+1}}_{L^q}^p $ and $ g(0) = \norm{Z_j}_{L^q}^p $, we obtain
  \begin{align*}
    \norm{Z_{j+1}}_{L^q}^p 
      &\leqslant \norm{Z_j}_{L^q}^p + 
      p \norm{Z_j}_{L^q}^{p-q}
      \dualb{\snm{Z_j}^{q-2}Z_j, \, \delta_j} +
      \int_{t_j}^{t_{j+1}} \mathcal G(t) \, \mathrm{d}W_H(t)  \\
      &\quad {} +  p|p-q| \int_0^1
      \norm{Z_j+\theta M_j}_{L^q}^{p-2q}
      \dualb{\snm{Z_j+\theta M_j}^{q-2}(Z_j+\theta M_j), \, M_j}^2 \, \mathrm{d}\theta \\
      & \quad {}+p(q-1) \int_0^1 
      \norm{Z_j+\theta M_j}_{L^q}^{p-q} \dualb{
        \snm{Z_j+\theta M_j}^{q-2}, \, M_j^2
      } \, \mathrm{d}\theta.
  \end{align*}
Given that \(Z_0 = 0\) and the preceding inequality holds for all \(0 \leqslant j < J\),
we deduce the following bound for the supremum of the sequence \((Z_j)_{j=0}^J\):
\begin{align*}
& \max_{0 \leqslant j \leqslant J} \left\|Z_j\right\|_{L^q}^p
\leqslant p \sum_{j=0}^{J-1}
\left\|Z_j\right\|_{L^q}^{p-q}
\left|\dualb{ \left|Z_j\right|^{q-2}Z_j,\, \delta_j }\right| 
+ \sup_{t \in [0,T]} \left|\int_{0}^{t} \mathcal{G}(s) \, \mathrm{d}W_H(s)\right| \\
&\qquad\qquad {} + p|p-q| \sum_{j=0}^{J-1}
\int_0^1 \left\|Z_j+\theta M_j\right\|_{L^q}^{p-2q}
\dualb{ \left|Z_j+\theta M_j\right|^{q-2}(Z_j+\theta M_j), \, M_j }^2 \, \mathrm{d}\theta \\
&\qquad\qquad {} + p(q-1) \sum_{j=0}^{J-1} \int_0^1
\left\|Z_j+\theta M_j\right\|_{L^q}^{p-q}
\dualb{ \left|Z_j+\theta M_j\right|^{q-2}, \, M_j^2 } \, \mathrm{d}\theta.
\end{align*}
By taking expectations on both sides of the inequality, we arrive at
the desired inequality \eqref{eq:729}.

\textbf{Step 2}.
From the definition of \(I_1\), it follows that
\begin{align*}
  I_1 & \leqslant p \mathbb{E}\bigg[
    \sum_{j=0}^{J-1}
    \norm{Z_j}_{L^q}^{p-q}
    \normb{\snm{Z_j}^{q-2}Z_j}_{\dot{H}^{1,q/(q-1)}}
    \norm{\delta_j}_{\dot{H}^{-1,q}}
  \bigg] \\
  & \leqslant c \mathbb{E}\bigg[
    \sum_{j=0}^{J-1} \norm{Z_j}_{L^q}^{p-q}
    \normb{\snm{Z_j}^{q-2}Z_j}_{\dot{H}^{1,q/(q-1)}}
    \norm{\delta_j}_{\dot{H}_h^{-1,q}}
  \bigg].
\end{align*}
For any \(0 \leqslant j < J\), we observe that
\begin{align*}
  \|\delta_j\|_{\dot{H}_h^{-1,q}} 
  &\leqslant
  \|(I-\tau\Delta_h)^{-1} - I\|_{\mathcal{L}(\dot{H}_h^{1,q}, \dot{H}_h^{-1,q})}
  \|Z_j\|_{\dot{H}_h^{1,q}} \\
  &=
  \|(I-\tau\Delta_h)^{-1} - I\|_{\mathcal{L}(\dot{H}_h^{2,q}, \dot{H}_h^{0,q})}
  \|Z_j\|_{\dot{H}_h^{1,q}} \\
  &\leqslant c\tau \|Z_j\|_{\dot{H}_h^{1,q}},
\end{align*}
where the last inequality is justified by \eqref{eq:basic} with \(\alpha = 0\) and \(\beta = 2\).
Furthermore,
\begin{align*}
  \|\,|Z_j|^{q-2}Z_j\|_{\dot{H}^{1,q/(q-1)}} 
  &\leqslant
  c\|\nabla(|Z_j|^{q-2}Z_j)\|_{L^{q/(q-1)}} \\
  &\leqslant
  c \|Z_j\|_{L^q}^{q-2}
  \|\nabla Z_j\|_{L^q} \quad\text{(by H\"older's inequality)} \\
  &\leqslant c \|Z_j\|_{\dot{H}_h^{1,q}}^{q-1}.
\end{align*}
Consequently, putting the above estimates together, we obtain
\begin{equation}
  \label{eq:730}
  I_1 \leqslant 
  c \mathbb{E}\bigg[ \tau \sum_{j=0}^{J-1} \norm{Z_j}_{\dot{H}_h^{1,q}}^p
  \bigg].
\end{equation}

\textbf{Step 3}. 
By the Burkholder-Davis-Gundy inequality and Hölder's inequality, we obtain
\begin{align*}
  I_2
  & \leqslant
  c \mathbb{E} \Big[
    \int_0^{T} \norm{\mathcal{G}(t)}_{\gamma(H, \mathbb{R})}^2 \, \mathrm{d}t
  \Big]^{1/2} \\
  &= c \mathbb{E} \Big[
    \sum_{j=0}^{J-1} \int_{t_j}^{t_{j+1}}
    \normB{
      \norm{Z_j}_{L^q}^{p-q} \dualb{\snm{Z_j}^{q-2}Z_j, \, \Psi(t)}
    }_{\gamma(H, \mathbb{R})}^2 \, \mathrm{d}t
  \Big]^{1/2} \\
  &\leqslant
  c \mathbb{E} \Big[
    \sum_{j=0}^{J-1} \int_{t_j}^{t_{j+1}}
    \norm{Z_j}_{L^q}^{2p-2q}
    \normb{\snm{Z_j}^{q-2}Z_j}_{L^{q/(q-1)}}^2
    \norm{\Psi(t)}_{\gamma(H, L^q)}^2 \, \mathrm{d}t
  \Big]^{1/2} \\
  &= c \mathbb{E} \Big[
    \sum_{j=0}^{J-1} \int_{t_j}^{t_{j+1}}
    \norm{Z_j}_{L^q}^{2p-2}
    \norm{\Psi(t)}_{\gamma(H, L^q)}^2 \, \mathrm{d}t
  \Big]^{1/2}.
\end{align*}
It follows that
\begin{align*} 
  I_2 \leqslant c \mathbb{E} \Big[
    \max_{0 \leqslant j \leqslant J} \norm{Z_j}_{L^q}^{p-1}
    \norm{\Psi}_{L^2(0,T; \gamma(H, L^q))}
  \Big].
\end{align*}
By Young's inequality, we then obtain
\begin{equation}
  \label{eq:731}
  I_2 \leqslant 
  \frac{1}{2} \mathbb{E} \Big[ \max_{0 \leqslant j \leqslant J}
  \norm{Z_j}_{L^q}^p \Big] + c \norm{\Psi}_{L^p(\Omega; L^2(0,T;\gamma(H, L^q)))}^p.
\end{equation}

    \textbf{Step 4}. 
We employ Hölder's inequality and Young's inequality as follows:
    \begin{align*}
      I_3 + I_4 
      & \leqslant c \mathbb E\bigg[
        \sum_{j=0}^{J-1} 
        \int_0^1 \norm{Z_j+\theta M_j}_{L^q}^{p-2} \norm{M_j}_{L^q}^2 
        \, \mathrm{d}\theta
      \bigg]\\
      &\leqslant c\mathbb E\bigg[
        \sum_{j=0}^{J-1} \mathbb E \Big[
          \int_0^1 
          \tau\norm{Z_j+\theta M_j}_{L^q}^p +
          \tau^{1-p/2}\norm{M_j}_{L^q}^p \, \mathrm{d}\theta
        \bigg] \\
      &\leqslant c \mathbb E\bigg[
        \tau\sum_{j=0}^{J-1} \norm{Z_j}_{L^q}^p
      \bigg] + c \tau^{1-p/2} \mathbb E\bigg[ \sum_{j=0}^{J-1} \norm{M_j}_{L^q}^p \bigg].
    \end{align*}
   Moreover,
    \begin{align*}
      \mathbb E\bigg[\sum_{j=0}^{J-1} \norm{M_j}_{L^q}^p \bigg]
    & \stackrel{\mathrm{(i)}}{\leqslant}
    c \mathbb E\bigg[ \sum_{j=0}^{J-1} \norm{\delta_j}_{L^q}^p \bigg]
    + c \tau^{p/2-1} \sum_{j=0}^{J-1}
    \int_{t_j}^{t_{j+1}} \norm{\Psi(t)}_{L^p(\Omega;\gamma(H,L^q))}^p \, \mathrm{d}t \\
    & \stackrel{\mathrm{(ii)}}{\leqslant}
    c \tau^{p/2} \mathbb E\bigg[  \sum_{j=0}^{J-1} \norm{Z_j}_{\dot H_h^{1,q}}^p \bigg] +
    c\tau^{p/2-1} \norm{\Psi}_{L^p(\Omega\times(0,T);\gamma(H,L^q))}^p, 
    \end{align*}
where in the first inequality (i), we invoke the definition \( M_j := \delta_j + \int_{t_j}^{t_{j+1}} \Psi(t) \, \mathrm{d}W_H(t) \),
Lemma \ref{lem:stoch}, and Hölder's inequality;
and in the second inequality (ii), we make use of the definition \( \delta_j := (I-\tau\Delta_h)^{-1}Z_j - Z_j \),
along with the resolvent estimate from Lemma \ref{eq:basic} with \( \alpha=0 \) and \( \beta=1 \).
By combining these estimates, we obtain
    \begin{equation}
      \label{eq:732}
      I_3 + I_4 \leqslant 
      c \mathbb E\bigg[ \tau \sum_{j=0}^{J-1} 
        \norm{Z_j}_{\dot H_h^{1,q}}^p 
      \bigg] +
      c \norm{\Psi}_{L^p(\Omega\times(0,T);\gamma(H,L^q))}^p.
    \end{equation}

    \textbf{Step 5}. Combining \cref{eq:729,eq:730,eq:731,eq:732}, and noting that $ Z_0 = 0 $,
    we deduce that
    \[
      \mathbb{E}\left[\max_{1 \leqslant j \leqslant J} \norm{Z_j}_{L^q}^p\right]
      \leqslant c\mathbb E\bigg[\tau\sum_{j=1}^{J-1} \norm{Z_j}_{\dot{H}_h^{1,q}}^p
      \bigg] +
      c\norm{\Psi}_{L^p(\Omega\times(0,T);\gamma(H,L^q))}^p.
    \]
Applying \cref{pro:DSMLP}, we infer that
\[
  \mathbb E\bigg[\tau\sum_{j=1}^{J-1} \norm{Z_j}_{\dot{H}_h^{1,q}}^p\bigg]
  \leqslant c\norm{g_h}_{L^p(\Omega\times(0,T);\gamma(H,\dot H_h^{0,q}))}^p.
\]
Moreover, using inequality \eqref{eq:resolvent} with parameters
\(\alpha = \beta = 0\), we establish
\[
\norm{\Psi}_{L^p(\Omega\times(0,T);\gamma(H,L^q))}
\leqslant c\norm{g_h}_{L^p(\Omega\times(0,T);\gamma(H,\dot H_h^{0,q}))}.
\]
Aggregating these estimates, we conclude the desired stability estimate \eqref{eq:DMINEQ}.
This completes the proof.
\end{proof}

\begin{remark}
  Considering the maximal inequality established in \cite[Theorem~1.1]{Neerven2022},
  \[
  \mathbb{E} \bigg[
  \sup_{t \in [0,T]} \left\lVert \int_0^t e^{(t-s)\Delta} g(s) \, \mathrm{d}W_H(s) \right\rVert_{L^q}^p
  \bigg] \leqslant c \mathbb{E} \left( \int_0^T \lVert g(t) \rVert_{\gamma(H,L^q)}^2 \, \mathrm{d}t \right)^{p/2},
  \]
  for all \( g \in L_\mathbb{F}^p(\Omega; L^2(0,T; \gamma(H,L^q))) \) with \( p, q \in [2, \infty) \),
  the stability estimate provided in Proposition \ref{prop:DMINEQ} is suboptimal.
  While Proposition \ref{prop:DMINEQ} is adequate for the purposes of the current investigation,
  it is an interesting question to derive a sharper stability estimate than the one given by \eqref{eq:DMINEQ}.
\end{remark}


\subsection{Proof of \texorpdfstring{\cref{thm:convLq}}{}}
\label{subsec:convLq}
In this subsection, we always assume that the conditions of Theorem \ref{thm:convLq} are satisfied. Let \( y_h \) be the strong solution of the following spatial semi-discretization:
\begin{equation}
  \label{eq:yh}
  \begin{cases}
    \mathrm{d}y_h(t) = (\Delta_h y_h + y_h - P_h y_h^3)(t) \, \mathrm{d}t +
    P_h F(y_h(t)) \, \mathrm{d}W_H(t), & t \in [0,T], \\
    y_h(0) = P_h v.
  \end{cases}
\end{equation}
Li and Zhou~\cite[Theorem~3.3]{LiZhou2024A} have established the error estimate
\begin{equation}
  \label{eq:y-yh}
  \norm{y - y_h}_{L^p(\Omega; C([0,T]; L^q))} \leqslant 
  ch^{2-\varepsilon}
\end{equation}
for all \( p \in (2, \infty) \), \( q \in [2, \infty) \), and \( \varepsilon > 0 \).
Therefore, to complete the proof of Theorem \ref{thm:convLq}, it remains to prove
\begin{equation}
  \label{eq:convLpLq}
  \normB{
    \max_{1 \leqslant j \leqslant J}
    \norm{y_h(t_j) - Y_j}_{L^{q}}
  }_{L^{p}(\Omega)} \leqslant c \tau^{1/2},
  \quad \forall p \in (2, \infty), \, \forall q \in [2, \infty),
\end{equation}
which is the primary focus of the remainder of this subsection.

To this end, we first note that by the regularity property of the mild solution \( y \) as stated in Proposition \ref{prop:y-regu}, the stability property of \( P_h \) as stated in Lemma \ref{lem:Ph-stab}, the standard approximation properties of \( P_h \), the error estimate \eqref{eq:y-yh}, the growth estimate of \( F \) in \eqref{eq:F-growth-H1}, and the inverse estimate, we deduce the following stability assertion for \( y_h \):
\begin{equation}
  \label{eq:yh-regu}
  \begin{aligned}
    & \norm{y_h}_{L^p(\Omega; C([0,T]; \dot H_h^{2-\epsilon, q}))}
    + \norm{P_h y_h^3}_{L^p(\Omega; C([0,T]; \dot H_h^{1, q}))} 
    + \norm{P_h F(y_h)}_{L^p(\Omega; C([0,T]; \dot H_h^{1, q}))} \\
    & \quad \text{is uniformly bounded w.r.t.~\( h \) for all \( p, q \in (2, \infty) \) and \( \epsilon > 0 \)}.
  \end{aligned}
\end{equation}
With this stability assertion and the inequalities \eqref{eq:etDeltah} and \eqref{eq:I-etDeltah}, a routine calculation yields the following estimates:
\begin{align}
& \mathbb{E}\biggl[ \sum_{j=0}^{J-1} \int_{t_j}^{t_{j+1}}
\norm{y_h(t) - y_h(t_j)}_{L^q}^p \, \mathrm{d}t\biggr]
\leqslant c\tau^{p/2},
\quad \forall p, q \in (2, \infty),
\label{eq:yh-err1} \\
& \mathbb{E}\biggl[ \sum_{j=0}^{J-1} \int_{t_j}^{t_{j+1}}
\norm{y_h(t) - y_h(t_{j+1})}_{L^q}^p \, \mathrm{d}t\biggr]
\leqslant c\tau^{p/2},
\quad \forall p, q \in (2, \infty),
\label{eq:yh-err2} \\
& \mathbb{E}\biggl[\sum_{j=0}^{J-1} \int_{t_j}^{t_{j+1}}
\norm{y_h(t) - y_h(t_{j+1})}_{\dot H_h^{1/2 + 1/p, q}}^p \, \mathrm{d}t\biggr]
\leqslant c\tau^{p/2},
\quad \forall p, q \in (2, \infty),
\label{eq:yh-err3} \\
& \mathbb{E}\biggl[\sum_{j=0}^{J-1} \int_{t_j}^{t_{j+1}}
\norm{y_h^3(t) - y_h^3(t_{j+1})}_{L^q}^p \, \mathrm{d}t\biggr]
\leqslant c\tau^{p/2},
\quad \forall p, q \in (2, \infty).
\label{eq:yh-err4}
\end{align}



The structure of the remaining proof is outlined as follows:
\begin{enumerate}{}
  \item In Subsection \ref{sssec:2}, we derive the error estimate
    \begin{equation}
      \mathbb E\biggl[\tau \sum_{j=0}^J \norm{y_h(t_j) - Y_j}_{L^2}^p\biggr]
      \leqslant c\tau^{p/2}, \quad \forall p \in [2,\infty).
      \label{eq:convL2} 
    \end{equation}
  \item In Subsection \ref{sssec:love}, we establish the following stability property:
    \begin{equation}
      \label{eq:Yj-lplq-stab}
      \mathbb E\bigg[\tau \sum_{j=1}^J \norm{Y_j}_{L^q}^p\biggr]
      \text{ is uniformly bounded w.r.t.~$h$ and $\tau$ for all $ p,q \in (2,\infty)$}.
    \end{equation}
  \item In Subsection \ref{sssec:hate}, we establish the error estimate
    \begin{equation}
      \label{eq:love-3}
      \mathbb E\biggl[\tau \sum_{j=0}^J \norm{Y_j - y_h(t_j)}_{L^q}^p\biggr]
      \leqslant c\tau^{p/2}, \quad 
      \forall p,q \in (2,\infty).
    \end{equation}
  \item Finally, in Subsection \ref{sssec:3}, by utilizing
    \cref{eq:yh-regu,eq:yh-err1,eq:yh-err2,eq:yh-err3,eq:yh-err4,eq:convL2,eq:Yj-lplq-stab,eq:love-3},
    along with the discrete deterministic maximal $L^p$-regularity
    estimate from \cref{lem:DMLP}, the discrete stochastic maximal $L^p$-regularity
    estimate from \cref{pro:DSMLP}, and the stability estimate
    from \cref{prop:DMINEQ}, we conclude the proof for the desired
    error estimate \eqref{eq:convLpLq}.
\end{enumerate}

\subsubsection{Proof of the error estimate \texorpdfstring{\cref{eq:convL2}}{}}
\label{sssec:2}
For each $ 0 \leqslant j \leqslant J $, let \( e_j := Y_j - y_h(t_j) \). It is clear that
\begin{equation}
  \begin{aligned}
    \label{eq:ej}
    e_{j+1} - e_j
    &= \int_{t_j}^{t_{j+1}}
    \Delta_h\big[Y_{j+1} - y_h(t)\big] + Y_j - y_h(t) + P_hy_h^3(t) - P_hY_{j+1}^3
    \, \mathrm{d}t \\
    & \quad {} + P_h \int_{t_j}^{t_{j+1}} F(Y_j) - F(y_h(t))
    \, \mathrm{d}W_H(t), \quad \forall 0 \leqslant j < J.
  \end{aligned}
\end{equation}
We introduce the decomposition \( e_j = \xi_j + \eta_j \) for each \( 0 \leqslant j \leqslant J \),
defining the sequence \( (\xi_j)_{j=0}^J \) as follows:
\[
  \begin{cases}
    \xi_{j+1} - \xi_j = \tau \Delta_h \xi_{j+1} + 
    \int_{t_j}^{t_{j+1}} \Delta_h \big[y_h(t_{j+1}) - y_h(t)\big] \, \mathrm{d}t,
    \quad 0 \leqslant j < J, \\
    \xi_0 = 0.
  \end{cases}
\]
It can be readily verified that
\begin{equation}
  \begin{aligned}
    \label{eq:etaj}
    \eta_{j+1} - \eta_j
    &= \int_{t_j}^{t_{j+1}} \left[
      \Delta_h\eta_{j+1} +
      Y_j - y_h(t) + P_hy_h^3(t) - P_hY_{j+1}^3
    \right] \, \mathrm{d}t \\
    & \quad {} + P_h \int_{t_j}^{t_{j+1}} F(Y_j) - F(y_h(t))
    \, \mathrm{d}W_H(t), \quad \forall 0 \leqslant j < J.
  \end{aligned}
\end{equation}
Moreover, by \cref{lem:DMLP} and \cref{eq:yh-err2} we obtain
\begin{equation}
  \label{eq:xi-lplq}
  \mathbb E\biggl[\tau\sum_{j=0}^J \norm{\xi_j}_{L^q}^p\biggr]
  \leqslant c\tau^{p/2}, \quad \forall p,q \in (2,\infty).
\end{equation}
The remainder of the proof is organized into the following four steps.

\textbf{Step 1.} Let us prove that, for any $ m \in \mathbb N_{>0} $ and $ 0 \leqslant j < J $,
  \begin{equation}
    \label{eq:goal1}
    \begin{aligned}
      & \mathbb E\norm{\eta_{j+1}}_{L^2}^{2m}
      \leqslant 
      (1 + c\tau) \mathbb E \normB{
        \eta_j+P_h\int_{t_j}^{t_{j+1}} F(Y_j) - F(y_h(t)) \, \mathrm{d}W_H(t)
      }_{L^2}^{2m}  \\
      & \quad {} + c\tau\mathbb E\norm{\eta_j}_{L^2}^{2m} + c\tau\mathbb E\norm{\xi_j}_{L^2}^{2m}
      + c\mathbb E \big[ A_{j,m} + B_{j,m} \big],
    \end{aligned}
  \end{equation}
where
\begin{align*}
  A_{j,m} &:= \tau\dual{\xi_{j+1}^2, \, y_h^2(t_{j+1}) + Y_{j+1}^2}^{m}, \\
  B_{j,m} &:= \int_{t_j}^{t_{j+1}} \norm{y_h(t) - y_h(t_{j})}_{L^2}^{2m}
  + \norm{y_h^3(t) - y_h^3(t_{j+1})}_{L^{2}}^{2m} \, \mathrm{d}t.
\end{align*}
To this end, fix any $ m \in \mathbb N_{>0} $ and $ 0 \leqslant j < J $.
Consider the function
\begin{align*}
  g(\theta)
      &:= \eta_j + \theta \int_{t_j}^{t_{j+1}}
      \Big[
        \Delta_h\eta_{j+1} + Y_j - y_h(t) +
        P_h y_h^3(t) - P_hY_{j+1}^3
      \Big] \, \mathrm{d}t \\
      & \qquad {} + P_h\int_{t_j}^{t_{j+1}} F(Y_j) - F(y_h(t)) \, \mathrm{d}W_H(t),
      \quad \theta \in [0,1].
\end{align*}
For any \( \theta \in [0,1] \), differentiating
\( \norm{g(\theta)}_{L^2}^{2m} \) with respect to \( \theta \) yields
\begin{align*}
\frac{\mathrm{d}}{\mathrm{d}\theta} \norm{g(\theta)}_{L^2}^{2m}
& = 2m \norm{g(\theta)}_{L^2}^{2m-2} \dual{g(\theta), g'(\theta)} \\
& = 2m \norm{g(\theta)}_{L^2}^{2m-2} \dual{g(1) - (1-\theta) g'(\theta), g'(\theta)} \\
& \leqslant 2m \norm{g(\theta)}_{L^2}^{2m-2}
\dual{g(1) , g'(\theta)}.
\end{align*}
Since $ g(1) = \eta_{j+1} $, we have for any $ \theta \in [0,1] $ that
\begin{align*}
  \frac{\mathrm{d}}{\mathrm{d}\theta}
  \norm{g(\theta)}_{L^2}^{2m} \leqslant
  2m \norm{g(\theta)}_{L^2}^{2m-2} \biggl[ 
    \int_{t_j}^{t_{j+1}} \dual{\eta_{j+1}, \, y_h^3(t_{j+1}) - Y_{j+1}^3}
    \, \mathrm{d}t \\
    + \int_{t_j}^{t_{j+1}} \dualb{
      \eta_{j+1}, \, \Delta_h\eta_{j+1} + Y_j  - y_h(t)
      + y_h^3(t) - y_h^3(t_{j+1})
  } \, \mathrm{d}t \biggr].
\end{align*}
Using the fact $ Y_i - y_h(t_i) = \xi_i +\eta_i $ for all $ 0 \leqslant i \leqslant J $,
an elementary calculation yields
\begin{align*}
  & \frac{\mathrm{d}}{\mathrm{d}\theta}
  \norm{g(\theta)}_{L^2}^{2m} \leqslant
  2m \norm{g(\theta)}_{L^2}^{2m-2} \biggl[ 
    \int_{t_j}^{t_{j+1}} \dual{\xi_{j+1}^2, \, y_h^2(t_{j+1}) + Y_{j+1}^2} \, \mathrm{d}t \\
  & \qquad\quad {} + \int_{t_j}^{t_{j+1}}
    \norm{\eta_j}_{L^2}^2 + \norm{\xi_j}_{L^2}^2
    + \norm{y_h(t) - y_h(t_j)}_{L^2}^2
    + \norm{y_h^3(t) - y_h^3(t_{j+1})}_{L^2}^2
  \, \mathrm{d}t \biggr].
\end{align*}
Hence, applying Young's inequality and using the definitions of
\(A_{j,m}\), and \( B_{j,m} \), we obtain the following differential inequality: for any $ \theta \in [0,1] $,
  \begin{align*}
    \frac{\mathrm{d}}{\mathrm{d}\theta}
    \norm{g(\theta)}_{L^2}^{2m}
    &\leqslant
    c\tau \norm{g(\theta)}_{L^2}^{2m} +
    c\tau \norm{\eta_j}_{L^2}^{2m}
    + c\tau \norm{\xi_j}_{L^2}^{2m}
    + c\big(A_{j,m} + B_{j,m}\big).
  \end{align*}
Employing Gronwall's lemma subsequently gives
\[
\norm{g(1)}_{L^2}^{2m} \leqslant (1+c\tau)\norm{g(0)}_{L^2}^{2m}
+ c\tau \norm{\eta_j}_{L^2}^{2m}
+ c\tau \norm{\xi}_{L^2}^{2m}
+ c\left(A_{j,m} + B_{j,m}\right).
\]
Taking the expectations on both sides yields
\[
\mathbb{E}\norm{g(1)}_{L^2}^{2m} \leqslant (1+c\tau)\mathbb{E}\norm{g(0)}_{L^2}^{2m}
+ c\tau\mathbb{E}\norm{\eta_j}_{L^2}^{2m}
+ c\tau\mathbb{E}\norm{\xi_j}_{L^2}^{2m}
+ c\mathbb{E}\big[A_{j,m} + B_{j,m}\big].
\]
The desired claim \cref{eq:goal1} then follows by recognizing that 
$ g(1) = \eta_{j+1} $ and $ g(0) = \eta_j + P_h\int_{t_j}^{t_{j+1}}
F(Y_j) - F(y_h(t)) \, \mathrm{d}W_H(t) $.


\textbf{Step 2.}
Fix any $ m \in \mathbb N_{>0} $ and $ 0 \leqslant j < J $.
Using H\"older's inequality, Young's inequality, and the fact
\[
  \mathbb E \biggl[
    \norm{\eta_j}_{L^2}^{2m-2} \dualB{\eta_j, \, \int_{t_j}^{t_{j+1}} F(Y_j) - F(y_h(t)) \, \mathrm{d}W_H(t)}
  \biggr] = 0,
\]
a straightforward calculation yields
\begin{align*}
  & \mathbb E\normB{
    \eta_j +  P_h \int_{t_j}^{t_{j+1}} F(Y_j) - F(y_h(t)) \, \mathrm{d}W_H(t)
  }_{L^2}^{2m} \\
  \leqslant{}
  & (1+c\tau)\mathbb E\norm{\eta_j}_{L^2}^{2m} +
  c\tau^{1-m}\mathbb E \normB{
    P_h\int_{t_j}^{t_{j+1}} F(Y_j) - F(y_h(t)) \, \mathrm{d}W_H(t)
  }_{L^2}^{2m}.
\end{align*}
For the second term on the right-hand side of this inequality,
we can use the fact that $ P_h $ is the $ L^2 $-orthogonal projection operator onto $X_h$.
Additionally, we apply \cref{lem:stoch}, H\"older's inequality, and the Lipschitz
continuity of $ F $ from \cref{eq:F-lips} to deduce that
\begin{align*}
  \tau^{1-m}\mathbb E \normB{
    P_h\int_{t_j}^{t_{j+1}} F(Y_j) - F(y_h(t)) \, \mathrm{d}W_H(t)
  }_{L^2}^{2m} \leqslant
  c\int_{t_j}^{t_{j+1}}
  \mathbb E \norm{Y_j - y_h(t)}_{L^2}^{2m}
  \, \mathrm{d}t.
\end{align*}
Consequently,
\begin{align*}
  & \mathbb E\normB{
    \eta_j +  P_h \int_{t_j}^{t_{j+1}} F(Y_j) - F(y_h(t)) \, \mathrm{d}W_H(t)
  }_{L^2}^{2m} \\
  \leqslant{}
  & (1+c\tau)\mathbb E\norm{\eta_j}_{L^2}^{2m} +
  c \int_{t_j}^{t_{j+1}} \mathbb E \norm{Y_j-y_h(t)}_{L^2}^{2m} \, \mathrm{d}t.
\end{align*}
Using the fact \(Y_j - y_h(t_j) = \xi_j + \eta_j \), along with
the definition of \(B_{j,m}\), we readily conclude that,
for any $ m \in \mathbb N_{>0} $ and $ 0 \leqslant j < J $,
  \begin{equation}
    \label{eq:goal2}
    \begin{aligned}
      & \mathbb E \normB{
        \eta_j + P_h \int_{t_j}^{t_{j+1}}
        F(Y_j) - F(y_h(t)) \, \mathrm{d}W_H(t)
      }_{L^2}^{2m} \\
      \leqslant{} &
      (1+c\tau) \mathbb E \norm{\eta_j}_{L^2}^{2m} +
      c\tau \mathbb E \norm{\xi_j}_{L^2}^{2m} +
      c\mathbb E B_{j,m}.
    \end{aligned}
  \end{equation}

\textbf{Step 3.}
By combining \cref{eq:goal1} and \cref{eq:goal2}, 
we arrive at the conclusion that for any \( m \in \mathbb{N}_{>0} \) and for all \( 0 \leqslant j < J \),
\[
\|\eta_{j+1}\|_{L^{2m}(\Omega;L^2)}^{2m} \leqslant
(1 + c\tau) \|\eta_j\|_{L^{2m}(\Omega;L^2)}^{2m}
+ c\tau \mathbb{E}\norm{\xi_j}_{L^2}^{2m}
+ c\mathbb{E}\big[A_{j,m} + B_{j,m}\big].
\]
Applying the discrete Gronwall's lemma and recalling that \(\eta_0 = 0\), we deduce that
\[
\max_{0 \leqslant j \leqslant J} \|\eta_j\|_{L^{2m}(\Omega;L^2)}^{2m}
\leqslant c \sum_{j=0}^{J-1} \mathbb{E}(\tau\norm{\xi_j}_{L^2}^{2m} + A_{j,m} + B_{j,m}), \quad \forall m \in \mathbb{N}_{>0}.
\]
Taking into account the inequalities \cref{eq:yh-err1,eq:yh-err4,eq:xi-lplq}, we then obtain that
\begin{equation}
\label{eq:etaj-l2}
\max_{0 \leqslant j \leqslant J} \|\eta_j\|_{L^{2m}(\Omega;L^2)}^{2m}
\leqslant c\tau^m + c\mathbb E\Big[ \sum_{j=0}^{J-1} A_{j,m} \Big], \quad \forall m \in \mathbb{N}_{>0}.
\end{equation}

\textbf{Step 4.}
Employing an analogous argument to that in Steps 1 and 2,
and using the stability property of \( P_h \) as stated in Lemma \ref{lem:Ph-stab},
as well as the condition \( v \in W_0^{1,\infty}(\mathcal O) \cap W^{2,\infty}(\mathcal O) \),
we can conclude that (see, also, \cite[Lemma~4.1]{Prohl2018}) for any \( m \in \mathbb N_{>0} \),
\[
\mathbb E \Bigl[ \tau\sum_{j=0}^J \norm{Y_j}_{L^2}^{2m} \Bigr]
+ \mathbb E \Bigl[ \tau \sum_{j=0}^J \norm{Y_j}_{L^4}^{4} \Bigr]
\]
is uniformly bounded with respect to \( h \) and \( \tau \). By Hölder's inequality, we further infer that for any \( p_0 \in [2,\infty) \), there exists \( q_0 \in (2,\infty) \) such that
\[
\mathbb E \Bigl[
  \tau \sum_{j=1}^J \norm{Y_j}_{L^{q_0}}^{p_0}
\Bigr]
\]
is uniformly bounded with respect to \( h \) and \( \tau \).
In view of this assertion and \cref{eq:xi-lplq}, we can use Hölder's inequality to infer that
\[
\mathbb E\Big[
  \tau\sum_{j=0}^{J-1} \dual{\xi_{j+1}^2, \, Y_{j+1}^2}^m
\Big] \leqslant c\tau^{m}, \quad \forall m \in \mathbb N_{>0}.
\]
Similarly, using the stability estimate of \( y_h \) in \cref{eq:yh-regu}, we have
\[
\mathbb E\Big[
  \tau\sum_{j=0}^{J-1} \dual{\xi_{j+1}^2, \, y_h^2(t_{j+1})}^m
\Big] \leqslant c\tau^{m}, \quad \forall m \in \mathbb N_{>0}.
\]
Consequently,
\[
\mathbb E\Big[
  \sum_{j=0}^{J-1} A_{j,m}
\Big] \leqslant c\tau^{m}, \quad \forall m \in \mathbb N_{>0},
\]
which, combined with \cref{eq:etaj-l2}, further implies that
\[
\max_{0 \leqslant j \leqslant J} \norm{\eta_j}_{L^{2m}(\Omega;L^2)}^{2m}
\leqslant c\tau^m, \quad \forall m \in \mathbb N_{>0}.
\]
Combining this inequality with \cref{eq:xi-lplq}, and noting that \( Y_j - y_h(t_j) = \xi_j + \eta_j \)
for all \( 0 \leqslant j \leqslant J \), we confirm the validity of \cref{eq:convL2} for all
\( p = 2m \), where \( m \) is a positive integer. The generalization to \( p \in [2,\infty) \)
is achieved by invoking Hölder's inequality. This completes the proof of the error estimate \cref{eq:convL2}.


\subsubsection{Proof of the stability property \texorpdfstring{\cref{eq:Yj-lplq-stab}}{}}
\label{sssec:love}
From \cref{eq:Y}, we decompose the numerical solution into three components:
\[
  Y_j = \eta_j^{(0)} + \eta_j^{(1)} + \eta_j^{(2)}
  \quad\text{for all } 0 \leqslant j \leqslant J.
\]
These components are defined by the recursive relations:
\[
  \left\{
    \begin{array}{ll}
      \eta^{(0)}_{j+1} - \eta^{(0)}_j = \tau \Delta_h \eta^{(0)}_{j+1},
      & \quad 0 \leqslant j < J, \\
      \eta_0^{(0)} = P_hv, 
    \end{array}
    \right.
\]
\[
  \left\{
    \begin{array}{ll}
      \eta^{(1)}_{j+1} - \eta^{(1)}_j = \tau \Delta_h \left( \eta^{(1)}_{j+1} + Y_j - P_hY_{j+1}^3 \right),
      & \quad 0 \leqslant j < J, \\
      \eta_0^{(1)} = 0, 
    \end{array}
    \right.
\]
and
\[
  \left\{
    \begin{array}{ll}
      \eta^{(2)}_{j+1} - \eta^{(2)}_j = \tau \Delta_h \eta^{(2)}_{j+1} + P_h \int_{t_j}^{t_{j+1}} F(Y_j) \, \mathrm{d}W_H(t),
      & \quad 0 \leqslant j < J, \\
      \eta_0^{(2)} = 0.
    \end{array}
    \right.
\]

Given the identity $\eta_j^{(0)} = (I -\tau\Delta_h)^{-j}P_hv$ for $0 \leqslant j \leqslant J$,
and using the stability property of $ P_h $ from \cref{lem:Ph-stab}
along with the resolvent estimate \cref{eq:resolvent} with $\alpha = \beta = 0$, we assert that
\begin{equation} 
    \label{eq:eta0-bound}
    \mathbb E\biggl[\tau\sum_{j=1}^J \norm{\eta_j^{(0)}}_{L^q}^p\biggr]
    \text{ is uniformly bounded w.r.t.~$h$ and $\tau$ for all $ p,q \in (2,\infty)$}.
\end{equation}
For any \( p \in (2,\infty) \), the stability property of $ P_h $ from
\cref{lem:Ph-stab} and the growth property of $ F $ in \cref{eq:F-growth} imply that
\begin{align*}
  \mathbb E\biggl[\tau\sum_{j=0}^{J} \left(
     \norm{Y_j-P_hY_j^3}_{L^2}^p +
     \norm{P_hF(Y_j)}_{\gamma(H,L^6)}^p
   \right)
 \biggr]
   &\leqslant
   c + c \mathbb E\biggl[
     \tau\sum_{j=0}^{J} \norm{Y_j}_{L^6}^{3p}
   \biggr]
\end{align*}
Moreover, it can be established that
\begin{align*}
  \mathbb E\biggl[\tau\sum_{j=0}^J \norm{Y_j}_{L^6}^{3p} \biggr]
   &\leqslant c \mathbb E\biggl[
     \tau \sum_{j=0}^J \left(
     \norm{Y_j-y_h(t_j)}_{L^6}^{3p} +
     \norm{y_h(t_j)}_{L^6}^{3p}
   \right)
 \biggr] \\
   &\leqslant c + c\norm{y_h}_{C([0,T];L^{3p}(\Omega;L^6))}^{3p},
\end{align*}
where the second inequality is justified by the error estimate \cref{eq:convL2},
the condition \( \tau \leqslant h^2 \), and the application of the inverse estimate
\(
   \norm{u_h}_{L^6} \leqslant ch^{-1} \norm{u_h}_{L^2}
\) for all \( u_h \in X_h \).
Consequently, by appealing to the stability assertion \cref{eq:yh-regu}, we conclude that
\[
  \mathbb E\biggl[\tau \sum_{j=0}^J \left(
      \norm{Y_j-P_hY_j^3}_{L^2}^p +
      \norm{P_hF(Y_j)}_{\gamma(H,L^6)}^p
    \right)
  \biggr]
\]
remains uniformly bounded with respect to \( h \) and \( \tau \)
for all \( p \in (2,\infty) \).
Using this result in conjunction with \cref{lem:DMLP,pro:DSMLP},
and taking advantage of the continuous embeddings of
\(\dot{H}_h^{2,2}\) and \(\dot{H}_h^{1,6}\) into \(L^q\) for any \(q \in [2, \infty)\),
we are able to assert that
\[
  \mathbb E\biggl[\tau \sum_{j=1}^{J}
    \left( \|\eta_j^{(1)}\|_{L^q}^p + \|\eta_j^{(2)}\|_{L^q}^p \right)
  \biggr]
\]
is also uniformly bounded with respect to \( h \) and \( \tau \) for all \( p, q \in (2, \infty) \).
Combining this finding with the assertion in \eqref{eq:eta0-bound} and
noting that
$Y_j = \eta_j^{(0)} + \eta_j^{(1)} + \eta_j^{(2)}$ for $1 \leqslant j \leqslant J$,
we confirm the stability property \eqref{eq:Yj-lplq-stab}.

\subsubsection{Proof of the error estimate \texorpdfstring{\cref{eq:love-3}}{}}
\label{sssec:hate}
Firstly, according to \cref{eq:ej}, for each
$ 0 \leqslant j \leqslant J $, we decompose the error $ Y_j - y_h(t_j) $ 
into three components:
\begin{equation}
  Y_j - y_h(t_j) = \chi_j^{(0)} + \chi_j^{(1)} + \chi_j^{(2)}.
  \label{eq:Yj-yh}
\end{equation}
The sequences $ (\chi_j^{(0)})_{j=0}^J $,
$ (\chi_j^{(1)})_{j=0}^J $, and $ (\chi_j^{(2)})_{j=0}^J $ are recursively
defined as follows:
\[
  \begin{cases}
      \chi^{(0)}_{j+1} - \chi^{(0)}_j =
      \tau \Delta_h \chi^{(0)}_{j+1} + \int_{t_j}^{t_{j+1}}
      \Delta_h\big( y_h(t_{j+1}) - y_h(t) \big)
      + Y_j - y_h(t) \, \mathrm{d}t,
      \quad 0 \leqslant j < J,  \\
      \chi_0^{(0)} = 0, 
  \end{cases}
\]
  \[
    \begin{cases}
      \chi^{(1)}_{j+1} - \chi^{(1)}_j =
      \tau \Delta_h \chi^{(1)}_{j+1} +
      P_h\int_{t_j}^{t_{j+1}} 
      y_h^3(t) - Y_{j+1}^3 \, \mathrm{d}t,
      \,\, 0 \leqslant j < J, \\
      \chi_0^{(1)} = 0, 
    \end{cases}
  \]
  and
  \[
    \begin{cases}
      \chi^{(2)}_{j+1} - \chi^{(2)}_j =
      \tau \Delta_h \chi^{(2)}_{j+1} +
      P_h\int_{t_j}^{t_{j+1}}
      F(Y_j) - F(y_h(t)) \, \mathrm{d}W_H(t),
      \quad 0 \leqslant j < J,  \\
      \chi_0^{(2)} = 0.
    \end{cases}
  \]

Secondly, by using \cref{lem:DMLP}, the continuous embedding of
$ \dot H_h^{2,2} $ into $ L^q $ for all $ q \in (2,\infty) $,
and the inequalities \cref{eq:yh-err1}, \cref{eq:yh-err2}, and \cref{eq:convL2},
we derive the following bound for $ (\chi_j^{(0)})_{j=1}^J $:
\begin{align}
  \mathbb E\biggl[\tau\sum_{j=1}^J \norm{\chi_j^{(0)}}_{L^q}^p\biggr]
    & \leqslant \mathbb E\biggl[
      \sum_{j=0}^{J-1} \int_{t_j}^{t_{j+1}}
      \norm{y_h(t) - y_h(t_{j+1})}_{L^q}^p
      + \norm{Y_j - y_h(t)}_{L^2}^p
      \, \mathrm{d}t
    \biggr] \notag \\
    & \leqslant c\tau^{p/2}, \quad
    \forall p,q \in (2,\infty).
    \label{eq:tmp0}
\end{align}

Thirdly, we proceed to estimate the sequence $(\chi_j^{(1)})_{j=1}^J$. Let us fix $p, q \in (2, \infty)$.
By the stability of $P_h$ as stated in Lemma \ref{lem:Ph-stab} and Hölder's inequality, a direct computation yields
\begin{align*}
& \mathbb E\biggl[
  \sum_{j=0}^{J-1} \int_{t_j}^{t_{j+1}} \left\| P_h y_h^3(t) - P_h Y_{j+1}^3 \right\|_{L^{3/2}}^p \, \mathrm{d}t
\Biggr] \\
& \leqslant c \left[
    \mathbb E\sum_{j=0}^{J-1} \int_{t_j}^{t_{j+1}}
    \left\| y_h(t) - Y_{j+1} \right\|_{L^2}^{2p} \, \mathrm{d}t
  \right]^{1/2} \left[
    \mathbb E\sum_{j=0}^{J-1} \int_{t_j}^{t_{j+1}}
    \left\| y_h(t) \right\|_{L^{12}}^{4p} + \left\| Y_{j+1} \right\|_{L^{12}}^{4p} 
  \, \mathrm{d}t
\right]^{1/2}.
\end{align*}
By the stability properties \eqref{eq:yh-regu} and \eqref{eq:Yj-lplq-stab}, it follows that
\begin{align*}
  \mathbb E \biggl[
    \sum_{j=0}^{J-1} \int_{t_j}^{t_{j+1}}
    \left\| P_h y_h^3(t) - P_h Y_{j+1}^3 \right\|_{L^{3/2}}^p \, \mathrm{d}t
  \biggr]
& \leqslant c \left[
    \mathbb E\sum_{j=0}^{J-1} \int_{t_j}^{t_{j+1}}
    \left\| y_h(t) - Y_{j+1} \right\|_{L^2}^{2p}
  \right]^{1/2}.
\end{align*}
Hence, by applying Lemma \ref{lem:DMLP} and the continuous embedding of $\dot{H}_h^{2, 3/2}$ into $L^q$, we deduce that
\begin{align*}
  \mathbb E\biggl[\tau \sum_{j=1}^J \norm{ \chi_j^{(1)} }_{L^q}^p\biggr]
  & \leqslant c \left[
    \mathbb E\sum_{j=0}^{J-1} \int_{t_j}^{t_{j+1}}
    \left\| y_h(t) - Y_{j+1} \right\|_{L^2}^{2p}
  \right]^{1/2}.
\end{align*}
Thus, by combining \eqref{eq:yh-err2} and \eqref{eq:convL2}, we infer the following bound for $(\chi_j^{(1)})_{j=1}^J$:
\begin{equation}
\label{eq:tmp1}
\mathbb E\biggl[\tau \sum_{j=1}^J \norm{ \chi_j^{(1)} }_{L^q}^p \biggr]
\leqslant c \tau^{p/2}, \quad \forall p, q \in (2, \infty).
\end{equation}

Fourthly, using Proposition \ref{pro:DSMLP} and the continuous embedding of $\dot{H}_h^{1,2}$ into $L^6$, we obtain that, for any $p \in (2, \infty)$,
\begin{align*}
  \mathbb E \biggl[\tau \sum_{j=1}^J \norm{ \chi_j^{(2)} }_{L^6}^p \biggr]
  & \leqslant
  c \mathbb E\biggl[
    \sum_{j=0}^{J-1} \int_{t_j}^{t_{j+1}}
    \left\| F(Y_j) - F(y_h(t)) \right\|_{\gamma(H, L^2)}^p
    \, \mathrm{d}t
  \biggr].
\end{align*}
By the Lipschitz continuity of $F$ as stated in (\ref{eq:F-lips}), along with the inequalities (\ref{eq:yh-err1}) and (\ref{eq:convL2}), we establish the following bound for $(\chi_j^{(2)})_{j=1}^J$:
\begin{align*}
  \mathbb E\biggl[\tau \sum_{j=1}^J \norm{ \chi_j^{(2)} }_{L^6}^p \biggr]
& \leqslant c \tau^{p/2}, \quad \forall p \in (2, \infty).
\end{align*}
Combining this bound with (\ref{eq:Yj-yh}), (\ref{eq:tmp0}), and (\ref{eq:tmp1}) yields the error estimate
\[
  \mathbb E\biggl[\tau \sum_{j=1}^J  \left\| Y_j - y_h(t_j) \right\|_{L^6}^p \biggr]
\leqslant c \tau^{p/2}, \quad \forall p \in (2, \infty).
\]
With this error estimate, the bound for $(\chi_j^{(2)})_{j=1}^J$ can be refined to
\begin{equation}
\label{eq:tmp2}
\mathbb E\bigg[\tau \sum_{j=1}^J \norm{ \chi_j^{(2)} }_{L^q}^p \bigg]
\leqslant c \tau^{p/2}, \quad 
\forall p, q \in (2, \infty).
\end{equation}

Finally, considering the decomposition (\ref{eq:Yj-yh}) and the identity $Y_0 = y_h(0)$,
we obtain the desired error estimate (\ref{eq:love-3}) by combining the bounds established in (\ref{eq:tmp0}), (\ref{eq:tmp1}), and (\ref{eq:tmp2}).

\subsubsection{Proof of the error estimate \texorpdfstring{\cref{eq:convLpLq}}{}}
\label{sssec:3}
 Fix any $ p \in (2,\infty) $ and $ q \in [2,\infty) $.
Let the sequences \( (\chi_j^{(0)})_{j=0}^J \),
\( (\chi_j^{(1)})_{j=0}^J \), and \( (\chi_j^{(2)})_{j=0}^J \) be
constructed as in Subsection \ref{sssec:hate}.
By their construction, for any $ 1 \leqslant j \leqslant J $,
the $ L^q $-norm of $ \chi_j^{(0)} $ can be estimated almost surely as follows:
\begin{align*}
    \norm{\chi_j^{(0)}}_{L^q}
    &\leqslant
    \sum_{k=0}^{j-1} \int_{t_k}^{t_{k+1}}
    \normB{
      (I-\tau\Delta_h)^{k-j} \big[
        \Delta_h\left( y_h(t_{k+1}) - y_h(t)\right)
        + Y_k - y_h(t)
      \big]
    }_{L^q} \, \mathrm{d}t \\
    &\leqslant
      \sum_{k=0}^{j-1} \int_{t_k}^{t_{k+1}}
      \norm{(I-\tau\Delta_h)^{k-j}}_{\mathcal{L}(\dot{H}_h^{1/2+1/p,q}, \dot{H}_h^{2,q})}
      \norm{y_h(t) - y_h(t_{k+1})}_{\dot{H}_h^{1/2+1/p,q}} \, \mathrm{d}t \\
    & \quad {} + \sum_{k=0}^{j-1} \int_{t_k}^{t_{k+1}}
      \norm{(I-\tau\Delta_h)^{k-j}}_{\mathcal{L}(\dot{H}_h^{0,q})}
      \norm{Y_k - y_h(t)}_{L^q} \, \mathrm{d}t.
\end{align*}
Applying the resolvent estimate \cref{eq:resolvent} and employing H\"older's inequality,
we obtain, for any $1 \leqslant j \leqslant J$, almost surely,
\begin{align*}
\norm{\chi_j^{(0)}}_{L^q}
    &\leqslant
    c\left[
      \sum_{k=0}^{j-1} \int_{t_k}^{t_{k+1}}
      \norm{y_h(t) - y_h(t_{k+1})}_{\dot{H}_h^{1/2+1/p,q}}^p
      + \norm{Y_k - y_h(t)}_{L^q}^p \, \mathrm{d}t
    \right]^{\frac{1}{p}}.
\end{align*}
Consequently, for the sequence $(\chi_j^{(0)})_{j=1}^J$, we derive the following bound almost surely:
\[
  \max_{1\leqslant j \leqslant J}
  \norm{\chi_j^{(0)}}_{L^q}^p
  \leqslant  c \sum_{j=0}^{J-1} \int_{t_j}^{t_{j+1}}
  \norm{y_h(t) - y_h(t_{j+1})}_{\dot{H}_h^{1/2+1/p,q}}^p
  + \norm{Y_j - y_h(t)}_{L^q}^p \, \mathrm{d}t.
\]
Similarly, for $(\chi_j^{(1)})_{j=1}^J$, we have, almost surely,
\begin{align*}
  \max_{1\leqslant j\leqslant J}
  \norm{\chi_j^{(1)}}_{L^q}^p \leqslant
    c \sum_{j=0}^{J-1} \int_{t_j}^{t_{j+1}}
      \norm{P_hy_h^3(t) - P_hY_{j+1}^3}_{L^q}^p \, \mathrm{d}t.
\end{align*}
Combining these results leads to
\begin{align*}
    &\mathbb{E} \left[ \max_{1 \leqslant j \leqslant J}
    \norm{\chi_j^{(0)} + \chi_j^{(1)}}_{L^q}^p \right] \\
  \leqslant{} &
  c\sum_{j=0}^{J-1} \mathbb{E} \left[
    \int_{t_j}^{t_{j+1}} 
    \norm{y_h(t) - y_h(t_{j+1})}_{\dot{H}_h^{1/2+1/p,q}}^p + 
    \norm{Y_j-y_h(t)}_{L^q}^p 
    + \norm{P_hy_h^3(t) - P_hY_{j+1}^3}_{L^q}^p
    \, \mathrm{d}t
  \right] \\
  \leqslant{}&
  c\tau^{p/2} + c\mathbb E\biggl[\sum_{j=0}^{J-1} \int_{t_j}^{t_{j+1}} 
  \norm{P_hy_h^3(t) - P_hY_{j+1}^3}_{L^q}^p
\, \mathrm{d}t\biggr],
\end{align*}
 by \cref{eq:yh-err3,eq:yh-err1,eq:love-3}.
Furthermore, utilizing the stability property of $ P_h $,
the error estimate \cref{eq:love-3}, the inequality \cref{eq:yh-err2},
and the stability results in \cref{eq:yh-regu} and \cref{eq:Yj-lplq-stab},
we can establish the following inequality
\[
  \mathbb E\bigg[\sum_{j=0}^{J-1} \int_{t_j}^{t_{j+1}}
  \normb{P_hy_h^3(t) - P_hY_{j+1}^3}_{L^q}^p
\, \mathrm{d}t\bigg] \leqslant c\tau^{p/2}.
\]
Therefore, we conclude that
\begin{equation}
  \label{eq:bound1}
  \normB{\max_{1 \leqslant j \leqslant J}
  \normb{\chi_j^{(0)} + \chi_j^{(1)}}_{L^q}}_{L^p(\Omega)}
  \leqslant c\tau^{1/2}.
\end{equation}

Next, for the sequence \((\chi_j^{(2)})_{j=1}^J\), by the stability estimate established
in \cref{prop:DMINEQ}, we deduce that
\begin{align*}
    \left\|\max_{1 \leqslant j \leqslant J} \|\chi_j^{(2)}\|_{L^q}\right\|_{L^p(\Omega)} 
    &\leqslant c \bigg[
      \mathbb E\sum_{j=0}^{J-1}
      \int_{t_j}^{t_{j+1}}
      \left\|P_h\big(F(y_h(t))-F(Y_j)\big)\right\|_{\gamma(H,L^q)}^p
      \, \mathrm{d}t
    \bigg]^{1/p}.
  \end{align*}
Using the stability property of $ P_h $ from \cref{lem:Ph-stab}
and the Lipschitz continuity of $ F $ given by \cref{eq:F-lips},
and applying the results from \cref{eq:yh-err1} and \cref{eq:love-3},
we further establish the following bound:
\begin{equation}
  \label{eq:bound2}
    \Big\|\max_{1 \leqslant j \leqslant J} \|\chi_j^{(2)}\|_{L^q}\Big\|_{L^p(\Omega)} 
    \leqslant c\tau^{1/2}.
\end{equation}

Finally, given the decomposition \cref{eq:Yj-yh},
the desired error estimate \cref{eq:convLpLq} follows by
combining the bounds \cref{eq:bound1} and \cref{eq:bound2}.
This completes the proof of Theorem \ref{thm:convLq}.

\section{Numerical results}\label{sec:numer}
This section is dedicated to the numerical validation of the theoretical
findings presented in this paper.
The numerical experiments are conducted
using the following model equation:
\[
  \begin{cases}
    \mathrm{d}y(t) = (\Delta y + y - y^3)(t) \, \mathrm{d}t +
    y(t) \, \mathrm{d}\beta(t), \quad 0 < t \leqslant T, \\
    y(0) = v.
  \end{cases}
\]
Here, \( \beta \) represents a given Brownian motion,
the spatial domain \( \mathcal O \) is \( (0,1)^3 \),
and the initial condition is specified by the function \( v \) defined as:
\[
  v(x) := \sin(\pi x_1) \sin(\pi x_2) \sin(\pi x_3) 
  \quad\text{for all }  x:= (x_1,x_2,x_3) \in \mathcal O.
\]

To verify the spatial accuracy as established in Theorem \ref{thm:convLq},
we set the final time to \( T = 0.01 \) and the time step to
\( \tau = 1 \times 10^{-6} \). We then use the numerical solution
computed with a spatial mesh size of \( h = \frac{1}{2^8} \) as our
reference solution. Furthermore, we perform simulations across $500$
independent paths. The numerical results presented in Figure \ref{fig:space}
agrees well with the expected spatial convergence rate of \( O(h^{2-\epsilon}) \),
as outlined in \cref{eq:convLpLq-full},
for the cases of \( p=q=2 \), \( p=q=4 \), and \( p=q=16 \).

To further affirm the temporal accuracy claimed in Theorem \ref{thm:convLq},
we set the terminal time to \( T = 0.1 \) and adjust the spatial
mesh size \( h \) to meet the condition \( h = \tau^{1/2} \),
ensuring that the temporal discretization error is predominant.
We use the numerical solution with a time step of $ \tau = T/2^{13} $
as the reference solution. As before,
we conduct simulations over $500$ independent paths. The numerical results
displayed in Figure \ref{fig:time} confirm the temporal convergence
rate of \( O(\tau^{1/2}) \), consistent with the theoretical convergence
rate specified in \cref{eq:convLpLq-full}.



In our theoretical framework, it is essential that the spatial domain \(\mathcal{O}\) is a bounded,
convex subset of \(\mathbb{R}^3\) with a boundary of class \(\mathcal{C}^2\).
For the numerical experiments conducted in this study, we have chosen a cubic domain due to
its simplicity and practicality, which facilitates both computational implementation and
the interpretation of results. It is important to note that the theory of discrete stochastic
maximal \(L^p\)-regularity remains an area of ongoing research.

\begin{figure}[ht]
  \centering
  \includegraphics[width=0.7\textwidth]{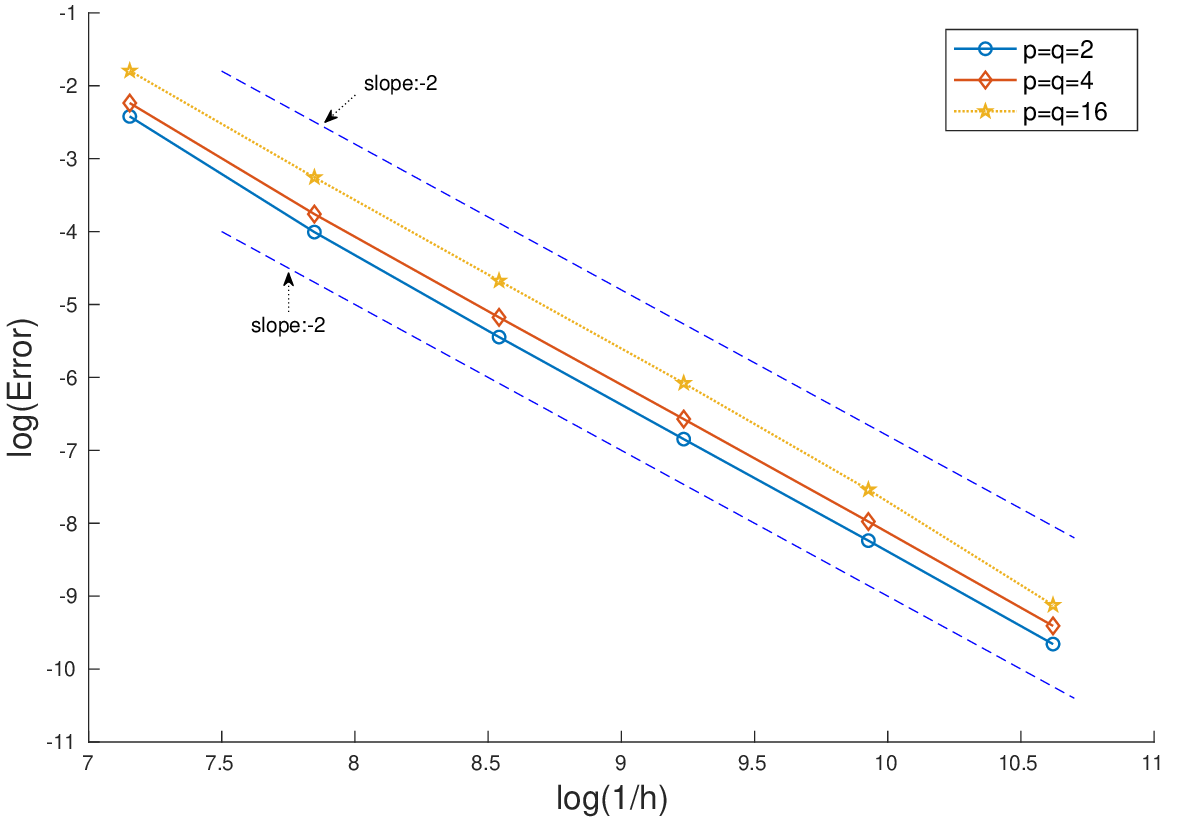}
  \caption{$\tau=1.0\times 10^{-6} $, $ T=0.01 $,}
  \label{fig:space}
\end{figure}

\begin{figure}[ht]
  \centering
  \includegraphics[width=0.7\textwidth]{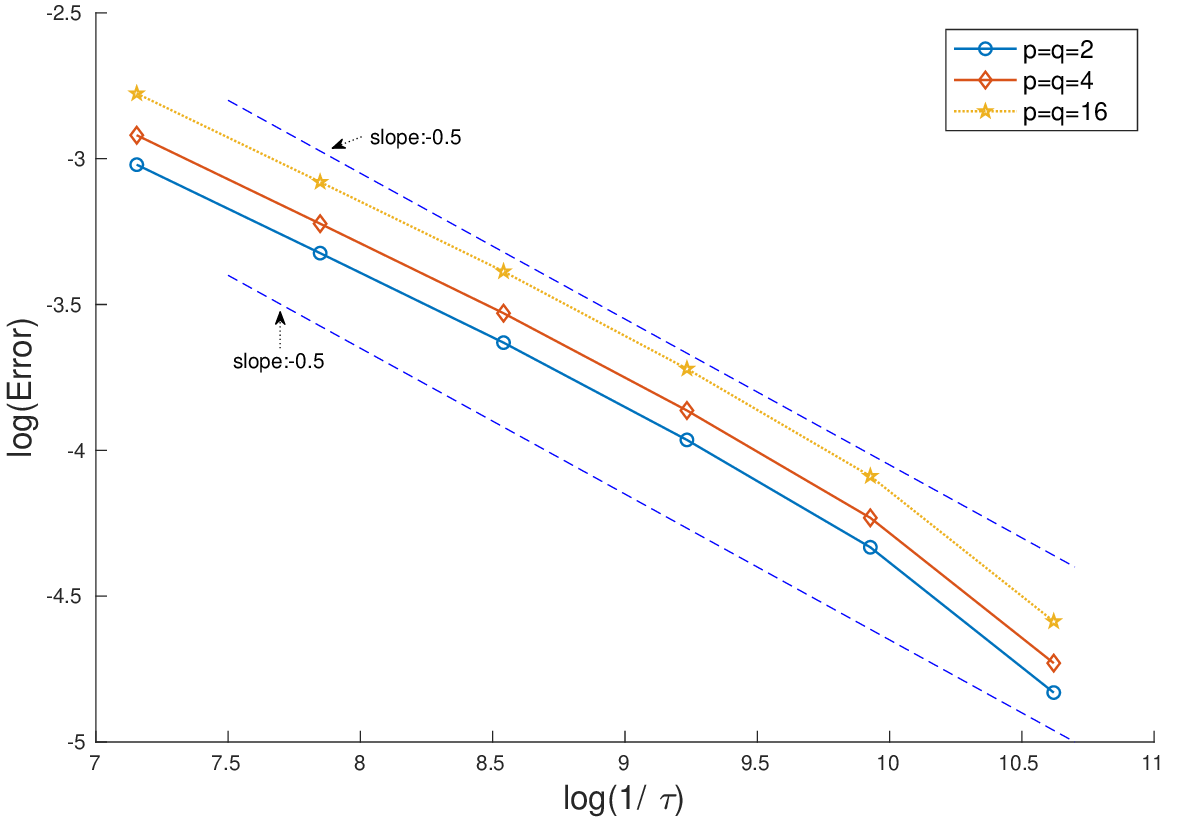}
  \caption{$T=0.1$, $ h=\tau^{1/2} $}
  \label{fig:time}
\end{figure}



\section{Conclusions}
\label{sec:conclusion}
In this paper, we have utilized discrete deterministic and stochastic maximal \(L^p\) regularity
estimates to derive a pathwise uniform convergence rate of \(O\big( h^{2-\epsilon} + \tau^{1/2} \big)\)
for a three-dimensional stochastic Allen-Cahn equation with multiplicative noise.
This convergence estimate employs general spatial \(L^q\)-norms and imposes mild
conditions on the diffusion coefficient. Our work introduces a new approach to establishing
pathwise uniform convergence within the context of general spatial \(L^q\)-norms for
finite element-based full approximations of nonlinear stochastic parabolic equations.

\bibliographystyle{plain}

\begin{thebibliography}{10}

\bibitem{Allen-Cahn1979}
S.~M. Allen and J.~W. Cahn.
\newblock {A microscopic theory for antiphase boundary motion and its application to antiphase domain coarsening}.
\newblock {\em Acta. Metall.}, 27:1084--1095, 1979.


\bibitem{Bakaev2002}
N.~Y. Bakaev, V.~Thom\'ee, and L.~B. Wahlbin.
\newblock Maximum-norm estimates for resolvents of elliptic finite element
  operators.
\newblock {\em Math. Comput.}, 72:1597--1610, 2002.

\bibitem{Becker2023}
S.~Becker, B.~Gess, A.~Jentzen, and P.~E. Kloeden.
\newblock {Strong convergence rates for explicit space-time discrete numerical approximations of stochastic Allen-Cahn equations}.
\newblock {\em Stoch PDE: Anal. Comp.}, 11:211--268, 2023.

\bibitem{Becker2019}
S.~Becker and A.~Jentzen.
\newblock {Strong convergence rates for nonlinearity-truncated Euler-type approximations of stochastic Ginzburg-Landau quations}.
\newblock {\em Stoch. Process. Appl.}, 129:28--69, 2019.

\bibitem{Brehier2019IMA}
C-E. Br\'ehier, J.~Cui, and J.~Hong.
\newblock {Strong convergence rates of semidiscrete splitting approximations for the stochastic Allen-Cahn equation}.
\newblock {\em IMA J. Numer. Anal.}, 39:2096--2134, 2019.


\bibitem{Brenner2008}
S.~C.~Brenner and R.~Scott.
\newblock {\em The mathematical theory of finite element methods}.
\newblock  Springer-Verlag New York, 2008.


\bibitem{Cox2010}
S.~Cox and J. van Neerven.
\newblock Convergence rates of the splitting scheme for parabolic linear stochastic {C}auchy problems.
\newblock {\em SIAM J. Numer. Anal.}, 48:428--451, 2010.

\bibitem{Cox2013}
S.~Cox and J. van Neerven.
\newblock Pathwise H\"older convergence of the implicit-linear {E}uler scheme for semi-linear SPDEs with multiplicative noise.
\newblock {\em Numer. Math.}, 125:259--345, 2013.


\bibitem{Douglas1975}
J.~Douglas, Jr., T.~Dupont, and L.~Wahlbin.
\newblock The stability in $L^q$ of the $ L^2$-projection into finite element function spaces.
\newblock {\em Numer. Math.}, 23:193--197, 1975.


\bibitem{Evans1992}
L.~C. Evans, H.~M. Soner, and P.~E. Souganidis.
\newblock {Phase transitions and generalized motion by mean curvature}.
\newblock {\em Commm. Pure Appl. Math.}, 45:1097--1123, 1992.

\bibitem{Feng2017SIAM}
X.~Feng, Y.~Li, and Y.~Zhang.
\newblock {Finite element methods for the stochastic Allen-Cahn equation with gradient-type multiplicative noises}.
\newblock {\em SIAM J. Numer. Anal.}, 55:194--216, 2017.

\bibitem{Funaki1995}
T.~Funaki.
\newblock {The scaling limit for a stochastic PDE and the separation of phases}.
\newblock {\em Probab. Theory Relat. Fields}, 102:221--288, 1995.

\bibitem{Gyongy2009}
I.~Gy\"ongy and A.~Millet.
\newblock {Rate of convergence of implicit approximations for stochastic evolution equa-
tions. In: Stochastic Differential Equations: Theory and Applications. Interdisciplinary Mathematical
Sciences, vol. 2, pp. 281–310. World Scientific Publishing, Hackensack, NJ (2007)}.


\bibitem{Gyongy2016}
I.~Gy\"ongy, S.~Sabanis, and D.~\v{S}i\v{s}ka.
\newblock {Convergence of tamed Euler scheme for a class of stochastic evolution equations}.
\newblock {\em Stoch. Partial Differ. Equ. Anal. Comput.}, 4:225--245, 2016.

\bibitem{Hutzenthaler2018}
M.~Hutzenthaler, A.~Jentzen, and D.~Salimova.
\newblock {Strong convergence of full-discrete nonlinearity-truncated accelerated exponential Euler-type approximations for stochastic Kuramoto-Sivashinsky equations}.
\newblock {\em Commun. Math. Sci.}, 16:1489--1529, 2018.

\bibitem{HytonenWeis2017}
T.~Hyt\"onen, J.~van Neerven, M.~Veraar, and L.~Weis.
\newblock {\em Analysis in Banach spaces}.
\newblock Springer, Cham, 2017.

\bibitem{Jentzen2009}
A.~Jentzen and P.~E. Kloeden.
\newblock The numerical approximation of stochastic partial differential equations.
\newblock {\em Milan J. Math.}, 77:205--244, 2009.

\bibitem{Jentzen2018}
A.~Jentzen and P.~Pu\v{s}nik.
\newblock {Exponential moments for numerical approximations of stochastic partial differential equations}.
\newblock {\em Stoch. Partial Differ. Equ. Anal. Comput.}, 6:565--617, 2018.

\bibitem{Jentzen2020IMA}
A.~Jentzen and P.~Pu\v{s}nik.
\newblock {Strong convergence rates for an explicit numerical approximation method for stochastic evolution equations with non-globally Lipschitz continuous nonlinearities}.
\newblock {\em IMA J. Numer. Anal.}, 40:1005--1050, 2020.


\bibitem{Kemmochi2016}
T.~Kemmochi.
\newblock Discrete maximal regularity for abstract cauchy problems.
\newblock {\em Studia Math.}, 234:241--263, 2016.


\bibitem{Klioba2024}
K.~Klioba and M. Veraar.
\newblock Temporal approximation of stochastic evolution equations with irregular nonlinearities.
\newblock {\em J. Evol. Equ.}, 24:43, 2024.


\bibitem{Kohn2007}
R.~Kohn, F.~Otto, M.G. Reznikoff, and E.~Vanden-Eijnden.
\newblock {Action minimization and sharp-interface limits for the stochastic Allen-Cahn equation}.
\newblock {\em Comm. Pure Appl. Math.}, 60:393--438, 2007.

\bibitem{Kovacs2018}
M.~Kov\'acs, D.~Furihata, S.~Larsson, and F.~Lindgren.
\newblock {Strong convergence of a fully discrete finite element approximation of the stochastic Cahn-Hilliard equation}.
\newblock {\em SIAM J. Numer.}, 56:708--731, 2018.

\bibitem{Kovacs2018-2}
M.~Kov\'acs, S.~Larsson, and F.~Lindgren.
\newblock {On the discretisation in time of the stochastic Allen-Cahn equation}.
\newblock {\em Math. Nachr.}, 291:966--995, 2018.

\bibitem{kovacs2011}
M.~Kov\'acs, S.~Larsson, and A.~Mesforush.
\newblock {Finite element approximation of the Cahn-Hilliard-Cook equations}.
\newblock {\em SIAM J. Numer.}, 49:2407--2429, 2011.

\bibitem{Libuyang2015}
B.~Li.
\newblock {Maximum-norm stability and maximal \text{$L^p$} regularity of FEMs for parabolic equations with Lipschitz continuous coefficients}.
\newblock {\em Numer. Math.}, 131:489--516, 2015.

\bibitem{LiXieLpTime2023}
B.~Li and X.~Xie.
\newblock Stability and convergence of the {E}uler scheme for stochastic evolution equations in {B}anach spaces.
\newblock {\em arXiv:2211.08375}, 2024.


\bibitem{LiZhou2024A}
B.~Li and Q.~Zhou.
\newblock {Convergence of a spatial semidiscretization for a three-dimensional stochastic Allen-Cahn equation with multiplicative noise}.
\newblock {\em arXiv:2401.09834}, 2024.

\bibitem{LiLpSpatail2023}
B.~Li and Z.~Zhou.
\newblock {Discrete stochastic maximal $L^p$-regularity and convergence of a spatial semidiscretization for a stochastic parabolic equation}.
\newblock {\em arXiv:2311.04615}, 2024.

\bibitem{LiuWei2013}
W.~Liu.
\newblock {Well-posedness of stochastic partial differential equations with Lyapunov condition}.
\newblock {\em J. Differ. Equ.}, 255:572--592, 2013.

\bibitem{LiuWei2010}
W.~Liu and M.~R\"ockner.
\newblock {SPDE in Hilbert space with locally monotone coefficients}.
\newblock {\em J. Funct. Anal.}, 259:2902--2922, 2010.

\bibitem{LiuQiao2021}
Z.~Liu and Z.~Qiao.
\newblock Strong approximation of monotone stochastic partial differential equations driven by multiplicative noise.
\newblock {\em Stoch PDE: Anal. Comp.}, 9:559--602, 2021.

\bibitem{Prohl2018}
A.~Majee and A.~Prohl.
\newblock {Optimal strong rates of convergence for a space-time discretization of the stochastic Allen-Cahn equation with multiplicative noise}.
\newblock {\em Comput. Methods Appl. Math.}, 18(2):297--311, 2018.

\bibitem{Prato2014}
G.~Da Prato and J.~Zabczyk.
\newblock {\em Stochastic equations in infinite dimensions}.
\newblock Cambridge University Press, 2 edition, 2014.

\bibitem{QiWang2019}
R.~Qi and X.~Wang.
\newblock {Optimal error estimates of Galerkin finite element methods for stochastic Allen-Cahn equation with additive noise}.
\newblock {\em J. Sci. Comput.}, 80:1171--1194, 2019.

\bibitem{QiZhangXu2023}
X.~Qi, Y.~Zhang, and C.~Xu.
\newblock {An efficient approximation to the stochastic Allen-Cahn equation with random diffusion coefficient field and multiplicative noise}.
\newblock {\em Adv. Comput. Math.}, 49:73, 2023.

\bibitem{Roger2013}
M.~R\"oger and H.~Weber.
\newblock {Tightness for a stochastic Allen–Cahn equation}.
\newblock {\em Stoch PDE: Anal. Comp.}, 1:175--203, 2013.

\bibitem{Sauer2015}
M.~Sauer and W.~Stannat.
\newblock {Lattice approximation for stochastic reaction diffusion equations with one-sided Lipschitz condition}.
\newblock {\em Math. Comp.}, 84:743--766, 2015.

\bibitem{Tartar2007}
L.~Tartar.
\newblock {\em An Introduction to Sobolev Spaces and Interpolation Spaces}.
\newblock Springer, Berlin, 2007.







\bibitem{Neerven2022}
J.~van Neerven and M.~Verrar.
\newblock Maximal inequalities for stochastic convolutions and pathwise uniform convergence of time discretization schemes.
\newblock {\em Stoch PDE: Anal. Comp.}, 10:516--581, 2022.

\bibitem{Neerven2007}
J.~van Neerven, M.~Verrar, and L.~Weis.
\newblock Stochastic integration in UMD Banach spaces.
\newblock {\em Ann. Probab.}, 35:1438–1478, 2007.

\bibitem{Neerven2012}
J.~van Neerven, M.~Veraar, and L.~Weis.
\newblock {Stochastic maximal $ L^p $-regularity}.
\newblock {\em Ann. Probab.}, 40:788--812, 2012.


\bibitem{Wangxiaojie2020}
X.~Wang.
\newblock {An efficient explicit full-discrete scheme for strong approximation of stochastic Allen–Cahn equation}.
\newblock {\em Stoch. Process. Appl.}, 130:6271--6299, 2020.


\bibitem{Yagi2010}
A. Yagi,
\newblock{\em Abstract parabolic evolution equations and their applications}.
\newblock  Springer, Berlin, 2010.

\end{thebibliography}

\end{document}